\DeclareMathOperator{\bigO}{\mathcal{O}}
\newtheorem{theorem}{Theorem}[section]
\newtheorem{lemma}{Lemma}[section]
\theoremstyle{definition}
\newtheorem{remark}{Remark}[section]
\newtheorem{example}{Example}[section]
\DeclareMathOperator{\rank}{rank}
\DeclareMathOperator{\diag}{diag}
\DeclareMathOperator{\T}{\sf T}
\let\veqno\@@eqno
\title{Highly accurate decoupled doubling algorithm for \\ large-scale M-matrix algebraic Riccati equations}
\date{}
\author{
Zhen-Chen Guo\thanks{Department of Mathematics, Nanjing University,  Nanjing  210093,  China; {\tt guozhenchen@nju.edu.cn}} \and 
Eric King-wah Chu\thanks{School of Mathematical Sciences, Monash University, 9 Rainforest Walk, Melbourne, Victoria 3800, Australia; {\tt eric.chu@monash.edu}} \and
Xin Liang\thanks{Yau Mathematical Sciences Center, Tsinghua University, Beijing 10084, China; {\tt liangxinslm@tsinghua.edu.cn} 
}
}
\begin{document}
%
%
%
\newcommand{\bs}{\begin{slide}{}}
\newcommand{\es}{\end{slide}} \newcommand{\be}{\begin{eqnarray*}}
\newcommand{\ee}{\end{eqnarray*}}
\newcommand{\bt}{\begin{tabular}}
\newcommand{\et}{\end{tabular}} \newcounter{bean}
\newcommand{\bl}[1]{\begin{list}{#1}{\usecounter{bean}}} \newcommand{\el}{\end{list}}
\newcommand{\bel}[1]{\begin{equation} \label{#1}} \newcommand{\eel}{\end{equation}}
\newenvironment{mat}{\left[\begin{array}}{\end{array}\right]}
\newenvironment{dt}{\left|\begin{array}}{\end{array}\right|} \newcommand{\bip}{\langle}
\newcommand{\eip}{\rangle} \newenvironment{arr}{\begin{array}}{\end{array}} \def\xa{x_{1}}
\def\xb{x_{2}} \def\xc{x_{3}} \newcommand{\ebs}{\end{slide}\begin{slide}{}}
\newcommand{\bc}{\begin{center}} \newcommand{\ec}{\end{center}}
\newcommand{\Lra}{\Leftrightarrow} \newcommand{\lra}{\leftrightarrow}
\newcommand{\Llra}{\Longleftrightarrow} \newcommand{\La}{\Leftarrow}
\newcommand{\Ra}{\Rightarrow} \newcommand{\la}{\leftarrow} \newcommand{\ra}{\rightarrow}
\def\eop{\hfill $\Box$ \vspace{5mm}}
\def\diag{\mathrm{diag}} \def\rank{\mathrm{rank}}
\newtheorem{lem}{Lemma}[section]
\newtheorem{cor}[lem]{Corollary}
\def\sr{{\cal R}} \def\cs{{\cal C}} \def\vx{{\bf x}}
%
\maketitle

\begin{abstract}
	We consider the numerical solution of large-scale M-matrix  algebraic Riccati equations (MAREs) with low-rank structures. We derive a new doubling iteration, decoupling the original four iteration formulae in the alternating-directional doubling algorithm. 
We prove that the kernels in the decoupled recursion  
are small M-matrices. Illumined by the highly accurate algorithm proposed in J.-G. Xue and R.-C. Li (2017)~\cite{xueL2017highly}, we construct 
the novel triplet representations for the small M-matrix kernels. 
And with these triplet representations, we develop a highly  accurate doubling algorithm, named dADDA, 
for large-scale MAREs with low-rank structures, where the GTH-like algorithm is applied for solving the associated linear systems.  Benefiting  from the decoupled form, the proposed dADDA   
can utilize the special structures that may exist in the original $A$ and $D$. For example, with  $A$ and $D$ being banded sparse or low-rank  updates with diagonal matrices,   the dADDA  only requires 
$\bigO(n+m)$ flops in each iteration. 
Illustrative numerical examples will be presented on the efficiency of our algorithm. 
\end{abstract}

\textbf{Keywords.}
decoupled form, highly accurate computation, large-scale problem, M-matrix, M-matrix algebraic Riccati equation, triplet representation

\textbf{AMS subject classifications.}
15A24, 65F30, 93C05

\pagestyle{myheadings} \thispagestyle{plain}
\markboth{Z.C. Guo,  E.K.-W. Chu and X. Liang }{dADDA for Large-Scale M-Matrix Algebraic Riccati Equations}

\section{Introduction}\label{intro}

Consider the M-matrix  algebraic Riccati equation (MARE):
\begin{equation}\label{eq:mare}
XCX - XD - AX + B = 0, 
\end{equation}
where  
\[
W = \begin{bmatrix} D & -C \\ -B & A \end{bmatrix} \in \mathbb{R}^{(m + n) \times (m + n)} 
\]
is a nonsingular or an irreducible singular M-matrix, 
with $A\in \mathbb{R}^{m\times m}$, $D\in \mathbb{R}^{n\times n}$ and $B, C^{\T}, X \in \mathbb{R}^{m\times n}$. 
The MARE is a specific  type of  nonsymmetric algebraic Riccati equations, 
and  there have been many associated studies  from transport theory \cite{rbellmanG1975introduction,juang1995existence}. 
The solvability of a highly structured MARE, with rank-$1$ updated  $A$ and $D$, and rank-$1$ $B$ and $C$, 
has been established in \cite{juang1995existence,juangL1998nonsymmetric,juangN1995global}. 
A keen competition of various iterative methods then follows, mostly for the MARE with rank-$1$ structures  but also its generations, 
lasting more than twenty years. 
This involves various iteration schemes \cite{baiGL2008fast,chguoN2007iterative,guoL2000iterative,guoL2010convergence,lu2005solution,mehrmannX2008explicit}, 
Newton's method \cite{biniIP2008fast,linBW2008modified,lu2005newton}, transformation to quadratic equations \cite{biniMP2010transforming} 
and the structure-preserving doubling algorithm (SDA) \cite{chiangCGHLX2009convergence,guoLX2006structurepreserving} and 
its generalizations \cite{liCJL2011solution,liCJL2011solution:2dim,liCKL2013solving,wangWL2012alternatingdirectional,xueL2017highly}, 
with theories benefited  from the associated Hamiltonian matrix and M-matrix structures. 
The problem attracts such vast interests because of its interesting structures, as well as the diverse applications in 
transport theory \cite{guo2001nonsymmetric,juang1995existence,juangL1998nonsymmetric,juangN1995global,liCJL2011solution,liCJL2011solution:2dim}, Markov-modulated fluid queue theory \cite{glatoucheP2009stochastic,jdrogers1994fluid,xueL2017highly} and the Wiener-Hopf decompositions \cite{guo2001nonsymmetric}.  

Generally, the MARE~\eqref{eq:mare} admits more than one solutions \cite{lancasterR1995algebraic}   
owing to its nonlinearity. However, it is shown in \cite{guo2001nonsymmetric,guoL2000iterative} 
that~\eqref{eq:mare} has a unique minimal nonnegative solution  $X$ (interpreted componentwise) which is of interest in  practice. 
Here, by the minimal nonnegative solution we mean that $\Theta-X$ is nonnegative  for any other nonnegative solution $\Theta$ of the MARE.   
The dual problem of \eqref{eq:mare} has the form, with $Y\in \mathbb{R}^{n\times m}$:
\begin{equation}\label{eq:dual_nare}
	YBY - YA - DY + C=0. 
\end{equation}
The dual problem~\eqref{eq:dual_nare} is also an MARE and admits a unique minimal nonnegative solution $Y$. 

When solving the MAREs, one may particularly be interested in the accuracy of small entries in $X$,   
in how   small relative perturbations to the entries of $A, D, B$ and  $C$  
affect the entries in $X$. 
Are they small,  no matter how tiny the entries in $X$ are? 
Thanks to the componentwise perturbation analysis in~\cite{xueXL2012accurate},  
small elements do not possess larger relative errors. 
Hence if implemented carefully several methods can compute $X$ with high relative componentwise accuracy,   
including the fix-point iterations~\cite{guo2001nonsymmetric}, the Newton method~\cite{chguoN2007iterative}, 
the SDA~\cite{guoLX2006structurepreserving} and also the alternating-directional doubling algorithm (ADDA)~\cite{nguyenP2015componentwise,xueL2017highly}. 
All the methods mentioned above are efficient for MAREs of small or  medium sizes, in terms of execution time and memory requirements.              
Besides, in the SDA or ADDA,  the nonsingular M-matrix kernels may become ill-conditioned, 
especially for the critical case. Consequently, the inversions of these kernels may then cost the approximate solution its precision, and highly accurate computations for 
linear equations related to the kernels are necessary, which generally yields a high accuracy solution $X$.

In this paper, we solve the large-scale MAREs with low-rank structures, and 
propose a highly efficient method for the MARE \eqref{eq:mare}, 
generalizing the new decoupled form of the SDA \cite{guoCLL2020decoupled} and 
adapting the highly accurate  GTH-like algorithm for the M-matrix structures in the ADDA~\cite{xueL2017highly}. In detail, we show that the ADDA can be decoupled using the low-rank structures and for the solution $X$ one only needs to compute the iterative 
recursion of $H_k=\gamma \check{U}_k(I-Y_k Z_k)^{-1} \check{Q}_k^{\T}$. 
Avoiding calculating $E_k$ and $F_k$ (see \eqref{eq:sda}), the decoupled  
ADDA can be applied to solve large-scale problems efficiently because it 
takes full advantage of the structure that may exists in the original $W$. 
Then we prove that these small size kernels $I-Y_kZ_k$ are M-matrices, 
and we construct their triplet representations,  
for the GTH-like algorithm in high accuracy. 
Note that the highly accurate ADDA algorithm \cite{xueL2017highly} 
produces the triplet representations 
for some $n\times n$   and $m\times m$ M-matrices 
from that of $W$, computing  some required nonnegative vectors recursively. 
Our triplet representations are not straightforward adaptations of that given in~\cite{xueL2017highly}. With the decoupled form and the novel 
triplet representations for M-matrix kernels of  small sizes, 
we develop the highly accurate  algorithm (namely  dADDA)   
for large-scale MAREs, which takes $\bigO(m+n)$ flops per iteration when 
$A$ and $D$ are banded, sparse or low-rank updated of some diagonal matrices. 
Benefiting from the decoupling and the triplet representations of the kernels, 
as confirmed by our extensive experiments, the novel dADDA may 
presently be the most efficient algorithm for large-scale structured MAREs. 

The ADDA is decoupled using the low-rank structures in Section~\ref{ssec:decoupled-form}. 
We prove the kernels in the iterations are M-matrices in Section~\ref{ssec:m-matrices}, 
and  in Section~\ref{ssec:triple-representations} we construct their triplet representations. 
We summarize our algorithm  in Section~\ref{ssec:algorithm}. 
Illustrative numerical examples are presented in Section~\ref{sec:numerical-examples}, 
before we conclude in Section~\ref{sec:conclusions}.  

\subsection*{Notations}
By $\mathbb{R}^{n\times n}$ we denote the set of all $n\times n$ real matrices, 
with $\mathbb{R}^n = \mathbb{R}^{n\times 1}$ and $\mathbb{R}=\mathbb{R}^{1}$. 
The $n\times n$ identity matrix is $I_n$ and we write $I$ if its dimension is clear.  
The zero matrix is $0$ and  the superscript $(\cdot)^{\T}$ takes the transpose. 
By $\boldsymbol{1}_{l} \in \mathbb{R}^l$ and $\boldsymbol{1}_{k\times l} \in \mathbb{R}^{k\times l}$, 
respectively, we denote the $l$-vector and $k\times l$ matrix of all ones.  
The symbol  $M \otimes N$  is   the Kronecker product of 
the   matrices $M$ and $N$.  For $\Phi\in \mathbb{R}^{m\times n}$, 
$\Phi_{ij}$ is its $(i,j)$ entry, and by $|\Phi|$ we denote the matrix with elements  $|\Phi_{ij}|$. The inequality $\Phi \le \Psi$ 
holds if and only if $\Psi_{ij} \le  \Phi_{ij}$,  
and similarly for $\Phi < \Psi$, $\Phi \ge \Psi$ and $\Phi > \Psi$. 
In particular, $\Phi$ is a nonnegative matrix  means that $\Phi_{ij} \ge 0$. 
The submatrix of $\Phi$, comprised of the rows $k$ to $m$ and columns $l$ to $n$, is written as $\Phi_{k:m,l:n}$. 
We denote   the triplet representation of $W$ by  
\[
(N_W, 
	[u_1^{\T}, u_2^{\T}]^{\T}, 
[v_1^{\T}, v_2^{\T}]^{\T}), 
\]
where $N_W$ is the off-diagonal part of $-W$, with  
\[
	W \begin{bmatrix}
		u_1 \\ u_2
	\end{bmatrix} = \begin{bmatrix}
		v_1 \\ v_2
	\end{bmatrix} \ge 0, 
\]
$0<u_1\in \mathbb{R}^n$, $0<u_2\in \mathbb{R}^m$, 
$0\le v_1\in \mathbb{R}^{n}$ and $0\le v_2\in \mathbb{R}^m$.

\section{Decoupled ADDA with high accuracy}\label{sec:decoupled-adda-with-high-accuracy}

The highly accurate   alternating-directional doubling algorithm  \cite{xueL2017highly} (accADDA)   is  a variation of the SDA and shares the same doubling recursions:
\begin{equation}\label{eq:sda}
	\begin{aligned}
F_{k+1} &= F_k (I_m - H_k G_k)^{-1} F_k, \ \ \ E_{k+1} = E_k (I_n - G_k H_k)^{-1} E_k, \\
H_{k+1} &= H_k + F_k (I_m-H_k G_k)^{-1} H_k E_k, \ \ \ G_{k+1} = G_k + E_k (I_n-G_k H_k)^{-1} G_k F_k.  
	\end{aligned}
\end{equation}
The pivotal difference from the SDA  lies in the initial iterates 
with the  two parameters $\alpha$ and $\beta$, with $D_\alpha := \alpha D + I$ 
and $A_\beta := \beta A + I$:
\begin{align}\label{eq:initial-matrix}
	\begin{bmatrix}
		E_0 & G_0 \\
		H_0 & F_0
	\end{bmatrix} = \begin{bmatrix}
	D_\alpha & -\beta C
	\\
	-\alpha B & A_\beta
\end{bmatrix}^{-1}
\begin{bmatrix}
	D_{-\beta} & \alpha C\\
	\beta B  & A_{-\alpha}
\end{bmatrix},
\end{align}
where $0\le \alpha \le \min_i a_{ii}^{-1}$,  $0 \le \beta\le \min_j d_{jj}^{-1}$, $\max\{\alpha, \beta\} > 0$,  
with $a_{ii}$ and $d_{jj}$ respectively being the diagonal entries of $A$ and $D$. The nonsingularity of the matrix in \eqref{eq:initial-matrix}  
is  guaranteed as below.  

\begin{lemma}[{\cite[Lemma~3.1]{xueL2017highly}}]\label{lm:initial-M}
	Let $\alpha\ge 	0$ and $\beta \ge 0$ with $\max\{\alpha, \beta\}>0$, then $\begin{bmatrix}
		D_\alpha & -\beta C \\ -\alpha B & A_\beta 
	\end{bmatrix}$ is a nonsingular M-matrix. 
\end{lemma}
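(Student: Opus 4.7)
The plan is to verify that the block matrix, call it $M := \begin{bmatrix} D_\alpha & -\beta C \\ -\alpha B & A_\beta \end{bmatrix}$, is first a Z-matrix, and then to exhibit a strictly positive vector $v$ with $Mv>0$; together these two properties characterise a nonsingular M-matrix. The Z-matrix part should be immediate: since $W$ is an M-matrix its diagonal is positive and its off-diagonals nonpositive, so the diagonal of $M$ consists of entries of the form $1+\alpha d_{ii}$ or $1+\beta a_{jj}$ (all at least $1$), while the off-diagonals are $\alpha d_{ij}$, $\beta a_{ij}$, $-\alpha B_{ij}$, or $-\beta C_{ij}$, all nonpositive because $\alpha,\beta\ge 0$ and $B,C\ge 0$.

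The key structural observation I would exploit is the factorisation
\begin{equation*}
M \;=\; I_{m+n} + W\Lambda, \qquad \Lambda := \diag(\alpha I_n,\beta I_m),
\end{equation*}
which shifts the problem onto the known M-matrix $W$. Since $W$ is a nonsingular or irreducible singular M-matrix, one can produce $u=[u_1^{\T},u_2^{\T}]^{\T}>0$ with $Wu\ge 0$ (take $u=W^{-1}\boldsymbol{1}$ in the nonsingular case, or the Perron vector in the irreducible singular case); in particular $Du_1\ge Cu_2\ge 0$ and $Au_2\ge Bu_1\ge 0$.

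In the generic regime $\alpha,\beta>0$ the matrix $\Lambda$ is invertible, and I would simply set $v=\Lambda^{-1}u=[\alpha^{-1}u_1^{\T},\beta^{-1}u_2^{\T}]^{\T}>0$; then $Mv = v + Wu \ge v > 0$, giving the required witness in one line. The main obstacle I anticipate is the degenerate boundary case in which exactly one of $\alpha,\beta$ vanishes, so that $\Lambda^{-1}$ does not exist. For instance if $\alpha=0$ and $\beta>0$, then $M$ collapses to the block upper triangular form $\begin{bmatrix} I_n & -\beta C \\ 0 & I_m+\beta A \end{bmatrix}$, and I would instead take $v=[v_1^{\T},\beta^{-1}u_2^{\T}]^{\T}$ with $v_1>0$ chosen componentwise large enough that $v_1>Cu_2$; the first block of $Mv$ is then $v_1-Cu_2>0$ and the second block is $\beta^{-1}u_2+Au_2\ge \beta^{-1}u_2>0$. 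The case $\beta=0$, $\alpha>0$ is symmetric. Combined with the Z-matrix property, the existence of such a $v$ in every case forces $M$ to be a nonsingular M-matrix by the standard characterisation.
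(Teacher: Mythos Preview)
The paper does not supply its own proof of this lemma: it is quoted verbatim from \cite[Lemma~3.1]{xueL2017highly} and used as a black box, so there is nothing to compare your argument against here.

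That said, your argument is correct. The identity $M = I_{m+n} + W\Lambda$ with $\Lambda=\diag(\alpha I_n,\beta I_m)$ is the right structural observation, and when $\alpha,\beta>0$ the witness $v=\Lambda^{-1}u$ with $u>0$, $Wu\ge 0$ gives $Mv=v+Wu>0$ immediately. Your handling of the degenerate boundary (say $\alpha=0$, $\beta>0$) is also sound: $M$ becomes block upper triangular and the hand-picked $v_1>Cu_2$ does the job. One could shorten that case slightly by noting that the diagonal blocks $I_n$ and $A_\beta=I_m+\beta A$ are already nonsingular M-matrices (the latter because $A$ is a nonsingular M-matrix), so $M$ is block triangular with nonsingular M-matrix diagonal blocks and hence itself a nonsingular M-matrix; but your explicit-witness route is equally valid. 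The only minor point worth a sentence in a write-up is why $u=W^{-1}\boldsymbol{1}$ is strictly positive in the nonsingular case: this follows because $W^{-1}\ge 0$ and no row of $W^{-1}$ can vanish.
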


With $(F_0, E_0, H_0, G_0)$ from \eqref{eq:initial-matrix} as the initial iterates, 
by the doubling recursions in \eqref{eq:sda}, 
\cite{xueL2017highly}  shows that  
the sequences $(F_k, E_k, H_k, G_k)$ satisfy   
\begin{enumerate}
	\item $E_k\ge 0$ and $F_k\ge 0$  are uniformly bounded with respect to $k$; 
	\item $I-H_k G_k$ and $I-G_k H_k$ are nonsingular M-matrices; and 
	\item $0\le H_k \le H_{k+1} \le X$, $0\le G_k \le G_{k+1} \le Y$, implying that $\{H_k\}$ and $\{G_k\}$ respectively  converge increasingly to $X$ and $Y$. 
\end{enumerate}
Actually, $\{H_k\}$ and $\{G_k\}$  converge quadratically except for the critical case \cite{chiangCGHLX2009convergence}, 
in which they  converge linearly with a rate of $0.5$. 
With the uniformly bounded property, Xue and Li~\cite{xueL2017highly} subtly  devised the triplet representations of the nonsingular M-matrices $I-H_k G_k$ and $I-G_k H_k$ by tracking the difference 
$\begin{bmatrix}
	u_1 \\ u_2 
\end{bmatrix} - \begin{bmatrix}
E_k & G_k \\ H_k & F_k
\end{bmatrix}\begin{bmatrix}
	u_1 \\ u_2
\end{bmatrix}$ in the accADDA for MAREs.  

Recursively,  one constructs the triplet  representations of the M-matrices kernels $I-H_k G_k$ and $I-G_k H_k$ in a cancellation-free manner~\cite{xueL2017highly}.   
However, it is not suitable for large-scale MAREs with low-rank structures   since all the iterates $F_k$, $E_k$, $H_k$ and $G_k$ 
are required, leading to  a computational complexity of $\bigO(n^3+m^3)$ in each iteration.    
To adopt the  accADDA  to  large-scale MAREs, we firstly show that it can be decoupled when low-rank structure exists, as in our new dADDA. 
We prove the kernels in the dADDA are nonsingular M-matrices of small sizes. 
Then  we construct the triplet representations of these  nonsingular kernels,   
enabling the highly accurate  
computation with the GTH-like algorithm  for 
 the associated linear equations.    

We  assume that $B$ and $C$ are of low-rank with the full rank factorizations $B = B_l B_r^{\T}$ and $C = C_l C_r^{\T}$, 
where $0 \le B_l \in \mathbb{R}^{m\times p}$, $0\le B_r \in \mathbb{R}^{n\times p}$, $0\le C_l \in \mathbb{R}^{n\times q}$ and $0\le C_r\in \mathbb{R}^{m\times q}$.

\subsection{Decoupled ADDA}\label{ssec:decoupled-form}

Firstly, the initial iterates, specified in \eqref{eq:initial-matrix},  
can  be rewritten  as  
\begin{equation}\label{eq:initial}
	\begin{aligned}
		F_0& = (A_\beta  - \alpha \beta B D_\alpha^{-1} C)^{-1} (A_{-\alpha} + \alpha^2B D_\alpha^{-1}C),
		\\
		E_0 &= (D_\alpha - \alpha \beta C A_\beta^{-1}B)^{-1}(D_{-\beta} + \beta^2 C A_\beta^{-1}B), 	
		\\
		H_0 & = \gamma (A_\beta - \alpha \beta B D_\alpha^{-1} C)^{-1}BD_\alpha^{-1}, \qquad 
		G_0  = \gamma (D_\alpha - \alpha \beta C A_\beta^{-1}B)^{-1} CA_\beta^{-1},
	\end{aligned}
\end{equation}
where  $\gamma := \alpha + \beta>0$  and the nonsingularity of 
$D_\alpha$ and $A_\beta$ follows from that of $W$.  
In fact, since $W$ is a nonsingular or an irreducible M-matrix,  
$A$ and $D$ are nonsingular M-matrices, implying  that  $D_\alpha$ and $A_\beta$  are  nonsingular.  
Furthermore, by Lemma~\ref{lm:initial-M} and the results in \cite{meyer1989stochastic}, 
we have further results for the Schur complements    
$A_\beta - \alpha \beta BD_\alpha^{-1} C$ and $D_\alpha - \alpha \beta C A_\beta^{-1}B$, given below. 

\begin{lemma}\label{lm:schur-complement}
Let $\alpha\ge 	0$ and $\beta \ge 0$ with $\max\{\alpha, \beta\}>0$, 
then $D_\alpha$ and $A_\beta$ and  the Schur complements 
$A_\beta - \alpha \beta BD_\alpha^{-1} C$ and $D_\alpha - \alpha \beta C A_\beta^{-1}B$ 
are nonsingular M-matrices.
\end{lemma}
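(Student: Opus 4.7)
My plan is to reduce everything to Lemma~\ref{lm:initial-M} together with two standard facts about nonsingular M-matrices: (i) every principal submatrix of a nonsingular M-matrix is itself a nonsingular M-matrix, and (ii) the Schur complement of a nonsingular M-matrix with respect to any principal block is a nonsingular M-matrix. Fact (ii) is precisely the Meyer~\cite{meyer1989stochastic} reference already cited in the text preceding the lemma, so I can invoke it directly.

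First I would set
\[
M = \begin{bmatrix} D_\alpha & -\beta C \\ -\alpha B & A_\beta \end{bmatrix}.
\]
Lemma~\ref{lm:initial-M} gives, for the stated range of $\alpha, \beta$, that $M$ is a nonsingular M-matrix. Applying (i) to the two diagonal blocks yields immediately that $D_\alpha$ and $A_\beta$ are nonsingular M-matrices. This is the easy half; no numerical conditions on $\alpha,\beta$ beyond those in the hypothesis are needed.

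Next I would compute the two Schur complements of $M$ explicitly. Taking the Schur complement with respect to $D_\alpha$,
\[
M/D_\alpha = A_\beta - (-\alpha B)\, D_\alpha^{-1}\, (-\beta C) = A_\beta - \alpha\beta\, B D_\alpha^{-1} C,
\]
and taking the Schur complement with respect to $A_\beta$,
\[
M/A_\beta = D_\alpha - (-\beta C)\, A_\beta^{-1}\, (-\alpha B) = D_\alpha - \alpha\beta\, C A_\beta^{-1} B.
\]
Both $D_\alpha$ and $A_\beta$ are nonsingular (by the previous step), so the Schur complements are well-defined, and fact~(ii) then gives that each is a nonsingular M-matrix, which is exactly the claim.

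There is no real obstacle here: the entire argument is a two-line application of Lemma~\ref{lm:initial-M} plus the Schur-complement closure property from \cite{meyer1989stochastic}. The only minor bookkeeping point is to verify the two signs collapse so that the resulting off-diagonal expressions $-\alpha\beta B D_\alpha^{-1} C$ and $-\alpha\beta C A_\beta^{-1} B$ are entrywise nonpositive — which is automatic because $B, C \ge 0$ and $D_\alpha^{-1}, A_\beta^{-1} \ge 0$ as inverses of nonsingular M-matrices — but this is already implicit in the Schur-complement theorem and need not be checked separately.
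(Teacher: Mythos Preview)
Your proposal is correct and matches the paper's own justification: the paper does not give an explicit proof but simply states that the lemma follows from Lemma~\ref{lm:initial-M} together with the Schur-complement closure results of Meyer~\cite{meyer1989stochastic}, which is precisely the argument you have written out.
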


Now substitute the full rank factorizations $B=B_lB_r^{\T}$ and $C=C_lC_r^{\T}$ 
into \eqref{eq:initial}, by the Sherman-Morrison-Woodbury formula (SMWF), we obtain   
\begin{equation}\label{eq:schur1}
	\begin{aligned}
		&(A_\beta - \alpha  \beta B D_\alpha^{-1}C)^{-1}
		=(A_\beta - \alpha  \beta B_lB_r^{\T} D_\alpha^{-1}C_l C_r^{\T})^{-1}
		\\
		=&A_\beta^{-1} + \alpha \beta A_\beta^{-1} B_l (I - \alpha \beta B_r^{\T} D_\alpha^{-1} C_l C_r^{\T} A_\beta^{-1} B_l )^{-1} B_r^{\T} D_\alpha^{-1} C_l C_r^{\T} A_\beta^{-1} 
		\\
		=& A_\beta^{-1} + \beta U_0 (I-Y_0 Z_0)^{-1} Y_0 V_0^{\T}, 
	\end{aligned}
\end{equation}
\begin{equation}\label{eq:schur2}
	\begin{aligned}
		&	(D_\alpha - \alpha \beta C A_\beta^{-1}B )^{-1}
		= 	(D_\alpha- \alpha \beta C_lC_r^{\T} A_\beta^{-1}B_l B_r^{\T} )^{-1}
		\\
		=&D_\alpha^{-1} + \alpha \beta D_\alpha^{-1}C_l (I-\alpha \beta C_r^{\T} A_\beta^{-1}B_l B_r^{\T}D_\alpha^{-1}C_l )^{-1}C_r^{\T} A_\beta^{-1}B_l B_r^{\T} D_\alpha^{-1}
		\\
		=& D_\alpha^{-1} + \alpha W_0 (I-Z_0 Y_0)^{-1}Z_0 Q_0^{\T},
	\end{aligned}
\end{equation}
\begin{align*}
	\alpha \left[ A_\beta^{-1} + \beta U_0 (I-Y_0 Z_0)^{-1}Y_0 V_0^{\T}\right]
	B_l B_r^{\T} D_\alpha^{-1} C_l C_r^{\T}
	&=
	U_0(I-Y_0 Z_0)^{-1} Y_0 C_r^{\T},	
	\\
	\beta \left[D_\alpha^{-1} + \alpha W_0(I-Z_0 Y_0)^{-1}Z_0 Q_0^{\T}\right] 
	C_l C_r^{\T} A_\beta^{-1} B_l B_r^{\T}
	&=
	W_0(I-Z_0 Y_0)^{-1} Z_0 B_r^{\T},
\end{align*}
where 
\begin{equation}\label{eq:initial-decoupled-1}
	\begin{aligned}
		&U_0 = A_\beta^{-1}B_l,\quad  V_0 = A_\beta^{-\T}C_r, \quad  W_0 = D_\alpha^{-1} C_l, \quad   Q_0 =D_\alpha^{-\T} B_r,    
		\\
	&Y_0 = \alpha B_r^{\T} D_\alpha^{-1} C_l, \quad  Z_0 =\beta C_r^{\T} A_\beta^{-1}B_l.
	\end{aligned}
\end{equation}
As a result, after  defining 
\begin{equation}\label{eq:initial-decoupled-2}
	A_{\alpha,\beta} = A_\beta^{-1} A_{-\alpha}, \qquad 
	D_{\alpha,\beta} = D_\alpha^{-1} D_{-\beta}, 
\end{equation}
we obtain  
\begin{eqnarray*}
	F_0 
	& =&
	A_\beta^{-1} A_{-\alpha} + \beta U_0(I-Y_0 Z_0)^{-1} Y_0 V_0^{\T} A_{-\alpha} + \alpha U_0 (I-Y_0 Z_0)^{-1} Y_0 C_r^{\T}
	\\
	&\equiv& A_{\alpha,\beta} + \gamma U_0(I-Y_0 Z_0)^{-1} Y_0 V_0^{\T}, \\
	E_0 
	& =&
	D_\alpha^{-1} D_{-\beta} + \alpha W_0(I-Z_0 Y_0)^{-1} Z_0 Q_0^{\T} D_{-\beta} + \beta W_0 (I-Z_0 Y_0)^{-1} Z_0 B_r^{\T}
	\\
	&\equiv& D_{\alpha,\beta} + \gamma W_0(I-Z_0 Y_0)^{-1} Z_0 Q_0^{\T}, \\
	H_0 &=& \gamma \left[
	A_\beta^{-1} + \beta U_0 (I-Y_0Z_0)^{-1}Y_0 V_0^{\T} \right] 
	B_l B_r^{\T}D_\alpha^{-1}
	 \equiv \gamma U_0 (I-Y_0 Z_0)^{-1} Q_0^{\T}, \\
	G_0 & =& \gamma  
	\left[ D_\alpha^{-1} + \alpha W_0 (I-Z_0 Y_0)^{-1} Z_0 Q_0^{\T}\right]
	C_l C_r^{\T}A_\beta^{-1}
	\equiv \gamma W_0(I-Z_0 Y_0)^{-1} V_0^{\T}.
\end{eqnarray*}
Apparently, $H_0$ and $G_0$ are decoupled. 

Next define  $S_0 := V_0^{\T}U_0$, $T_0 := Q_0^{\T} W_0$, $U_1 := A_{\alpha,\beta}U_0$, $Q_1 := D_{\alpha,\beta}^{\T} Q_0$ and  
$L := I-Y_0Z_0 - \gamma^2 T_0(I-Z_0Y_0)^{-1}S_0$, then we get   
\begin{align*}
	&(I-H_0 G_0)^{-1} = \left[I-\gamma^2 U_0(I-Y_0Z_0)^{-1} T_0 (I-Z_0 Y_0)^{-1} V_0^{\T}\right]^{-1}
	\equiv  I + \gamma^2 U_0 L^{-1} T_0 (I-Z_0 Y_0)^{-1} V_0^{\T},
	\\
	&(I-H_0G_0)^{-1}H_0 
	\equiv \gamma U_0 L^{-1} Q_0^{\T}.
\end{align*}
Consequently,   it holds that 
\begin{align*}
	H_1 
	&= H_0 + F_0(I-H_0 G_0)^{-1}H_0 E_0 
	\\
	&=
	\gamma 
		[U_0,\, U_1]
	\begin{bmatrix}
		I & \gamma (I-Y_0Z_0)^{-1}Y_0 S_0 \\ 0 & I 
	\end{bmatrix}
	\begin{bmatrix}
		I-Y_0 Z_0 & \\ & L
	\end{bmatrix}^{-1}
	\begin{bmatrix}
		I & 0 \\ \gamma T_0 (I-Z_0 Y_0)^{-1}Z_0 & I
	\end{bmatrix}
	\begin{bmatrix}
		Q_0^{\T} \\ \\ Q_1^{\T}
	\end{bmatrix}
	\\
	& \equiv \gamma 
		[U_0,\, U_1] 
	(I - Y_1Z_1)^{-1}
	\begin{bmatrix}
		Q_0^{\T} \\ \\ Q_1^{\T}
	\end{bmatrix},
\end{align*}
where 
	$Y_1 = \begin{bmatrix}
		0 & Y_0 \\ Y_0 & \gamma T_0
	\end{bmatrix}$ and 
	$Z_1 =\begin{bmatrix}
		0& Z_0 \\ Z_0 & \gamma S_0
	\end{bmatrix}$. 
Now  denote    
$V_1 := A_{\alpha,\beta}^{\T}V_0$ and $W_1 := D_{\alpha,\beta}W_0$, 
then a similar but tedious process produces  
\begin{align*}
	G_1& = \gamma 
		[W_0,\, W_1]
	(I - Z_1 Y_1)^{-1} 
	\begin{bmatrix}
		V_0^{\T} \\ \\ V_1^{\T}
	\end{bmatrix}, 
	\qquad 
	F_1  = A_{\alpha,\beta}^2 + \gamma 
		[U_0,\, U_1] 
	(I-Y_1 Z_1)^{-1} Y_1 \begin{bmatrix}
		V_0^{\T} \\ \\ V_1^{\T}
	\end{bmatrix},
	\\
	E_1 & = D_{\alpha,\beta}^2 + \gamma 
		[W_0,\, W_1] 
	(I-Z_1 Y_1)^{-1} Z_1 
	\begin{bmatrix}
		Q_0^{\T} \\ \\ Q_1^{\T}
	\end{bmatrix}.
\end{align*}

Clearly, $H_1$ and $G_1$ are decoupled. By  a similar process with the help of the SMWF, we eventually obtain the sequences $(F_k, E_k, H_k, G_k)$ in the dADDA, as in the following theorem. 

\begin{theorem}[dADDA]\label{thm:decoupled-form-for-the-adda}
	Let  $U_j:=A_{\alpha,\beta} U_{j-1}$, $V_j:=A_{\alpha,\beta}^{\T} V_{j-1}$, 
	$W_j:=D_{\alpha,\beta} W_{j-1}$ and $Q_j:= D_{\alpha,\beta}^{\T} Q_{j-1}$ for $j\ge 1$.  For $k\ge 1$,   denote 
	\begin{align*}
		\check{U}_k &= [U_0, U_1, \cdots, U_{2^k-1}], \hspace*{1.05cm} 
		\check{V}_k =[V_0, V_1, \cdots, V_{2^k-1}],
		\\
		\check{W}_k &= [W_0, W_1, \cdots, W_{2^k-1}], \qquad
		\check{Q}_k =[Q_0, Q_1, \cdots, Q_{2^k-1}], 
	\end{align*}
	and  let 
	\begin{align}\label{eq:YZ}
		Y_k=\begin{bmatrix}
			0 & Y_{k-1} \\ Y_{k-1} & \gamma T_{k-1}
		\end{bmatrix}, 
		\qquad 
		Z_k=\begin{bmatrix}
			0&  Z_{k-1}\\ Z_{k-1} & \gamma S_{k-1}
		\end{bmatrix}
	\end{align} 
	with $T_{k-1} = \check{Q}_{k-1}^{\T} \check{W}_{k-1}$ and $S_{k-1} = \check{V}_{k-1}^{\T} \check{U}_{k-1}$. 
	Then the  iteration in \eqref{eq:sda} has the 
	following decoupled form
	\begin{align*}
		& F_k = A_{\alpha,\beta}^{2^k} + \gamma  \check{U}_k  
		(I  - Y_k Z_k)^{-1} Y_k  \check{V}_k^{\T}, 
		\qquad 
		E_k = D_{\alpha,\beta}^{2^k} + \gamma \check{W}_k
		(I-Z_k Y_k)^{-1} Z_k \check{Q}_k^{\T}, 
		\\
		& H_k = \gamma \check{U}_k 
		(I-Y_k Z_k)^{-1} \check{Q}_k^{\T},
		\qquad \qquad \qquad  
		G_k = \gamma \check{W}_k
		(I-Z_k Y_k)^{-1} \check{V}_k^{\T}. 
	\end{align*}
\end{theorem}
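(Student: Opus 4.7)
The plan is to proceed by induction on $k$. The base case $k=1$ has already been established by the explicit computation preceding the theorem, and that derivation exhibits in miniature the pattern that governs the general induction: a block factorization of $I - Y_1 Z_1$ is inserted between $[U_0, U_1]$ and $[Q_0, Q_1]^{\T}$ to produce the claimed form of $H_1$, and analogously for $G_1$, $F_1$, $E_1$.

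For the induction step, assume the four formulas hold at index $k$. Using the hypothesis one obtains
\[
H_k G_k = \gamma^2 \check{U}_k (I - Y_k Z_k)^{-1} T_k (I - Z_k Y_k)^{-1} \check{V}_k^{\T},
\]
with $T_k = \check{Q}_k^{\T} \check{W}_k$ and $S_k = \check{V}_k^{\T} \check{U}_k$, so the Sherman-Morrison-Woodbury formula, applied exactly as in the $k=0$ derivation in the excerpt, yields
\[
(I - H_k G_k)^{-1} H_k = \gamma \check{U}_k L_k^{-1} \check{Q}_k^{\T}, \qquad L_k := I - Y_k Z_k - \gamma^2 T_k (I - Z_k Y_k)^{-1} S_k,
\]
together with the symmetric identity for $(I - G_k H_k)^{-1} G_k$.

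The next step is to substitute into $H_{k+1} = H_k + F_k (I - H_k G_k)^{-1} H_k E_k$ and regroup. The key bookkeeping observation is that the recurrences defining $U_j$ and $Q_j$ give $A_{\alpha,\beta}^{2^k} \check{U}_k = [U_{2^k}, \dots, U_{2^{k+1}-1}]$ and $\check{Q}_k^{\T} D_{\alpha,\beta}^{2^k} = [Q_{2^k}, \dots, Q_{2^{k+1}-1}]^{\T}$, so that horizontal concatenation with $\check{U}_k$ (respectively vertical with $\check{Q}_k^{\T}$) yields precisely $\check{U}_{k+1}$ and $\check{Q}_{k+1}^{\T}$. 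Expanding $F_k \check{U}_k$ and $\check{Q}_k^{\T} E_k$ via the induction hypothesis and collecting the four resulting blocks presents $H_{k+1}$ in the shape $\gamma\, \check{U}_{k+1} M_k \check{Q}_{k+1}^{\T}$, where $M_k$ is the product of a unit block upper triangular factor, the block diagonal $\mathrm{diag}((I - Y_k Z_k)^{-1}, L_k^{-1})$, and a unit block lower triangular factor, exactly as in the $k=0\to 1$ passage of the excerpt.

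The central remaining task, and the only step where the argument is not pure bookkeeping, is to identify $M_k$ with $(I - Y_{k+1} Z_{k+1})^{-1}$. I would compute $I - Y_{k+1} Z_{k+1}$ directly from \eqref{eq:YZ}, perform its block LDU factorization, and verify that the $(1,1)$-Schur complement simplifies to $L_k$; the push-through identity $Z_k (I - Y_k Z_k)^{-1} = (I - Z_k Y_k)^{-1} Z_k$ is what reconciles the off-diagonal factors. Once this identity is established, the formulas for $G_{k+1}$, $F_{k+1}$, $E_{k+1}$ follow by the same block manipulations with the roles of $(U,Q)$ and $(W,V)$ interchanged, together with the appearance of $A_{\alpha,\beta}^{2^{k+1}}$ from $F_k A_{\alpha,\beta}^{2^k}$ and $D_{\alpha,\beta}^{2^{k+1}}$ from $D_{\alpha,\beta}^{2^k} E_k$.
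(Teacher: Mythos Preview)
Your proposal is correct and follows precisely the approach the paper adopts: the paper carries out the $k=0\to 1$ step in full detail before stating the theorem and then omits the general proof with a reference to \cite{guoCLL2020decoupled}, and your induction step is exactly the general-$k$ version of that template, including the SMWF reduction to $L_k$, the concatenation $\check{U}_{k+1}=[\check{U}_k,\,A_{\alpha,\beta}^{2^k}\check{U}_k]$, and the block LDU identification of $(I-Y_{k+1}Z_{k+1})^{-1}$ (the latter is in fact reproduced verbatim in the paper's proof of Theorem~\ref{thm:m-matrix}). There is nothing to add.
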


The proof of Theorem~\ref{thm:decoupled-form-for-the-adda} 
is similar to that for 
\[
	(\widehat{F}_k, \widehat{E}_k, \widehat{H}_k, \widehat{G}_k) \equiv 
	\left((\alpha^{-1} \beta)^{2^k} F_k, (\beta^{-1} \alpha)^{2^k}E_k, H_k, G_k\right)
\]
given in~\cite{guoCLL2020decoupled}, which we omit.  	

\subsection{Kernels are M-matrices}\label{ssec:m-matrices}

When applying the dADDA  to solve  large-scale MAREs with low-rank structures 
to obtain highly  accurate solution, 
two  crucial issues  have to be settled: 
\begin{enumerate}
\item Are $U_j$, $V_j$, $W_j$ and $Q_j$ all nonnegative? 
\item Are the kernels $I-Y_k Z_k$ and $I-Z_k Y_k$  nonsingular M-matrices? 
\end{enumerate}
For both questions, we need to show that $A_{\alpha,\beta}$, $D_{\alpha,\beta}$, 
$(I-Y_k Z_k)^{-1}$ and $(I-Z_k Y_k)^{-1}$ are nonnegative. 
In the following, we  assume that $\max\{\alpha,\beta\}>0$, which can be satisfied by choice.

\begin{lemma}\label{lm:ADnonnegative}
It holds that $A_{-\alpha} \ge0$ and $D_{-\beta} \ge 0$.	
\end{lemma}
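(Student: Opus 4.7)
The plan is to verify the two nonnegativity claims directly from the definitions, using only the sign structure of M-matrices together with the constraints already imposed on $\alpha$ and $\beta$.

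First I would unpack the notation: extending the definition $A_\beta := \beta A + I$ consistently, one has $A_{-\alpha} = I - \alpha A$, and similarly $D_{-\beta} = I - \beta D$. Since $W$ is a nonsingular or irreducible singular M-matrix, the diagonal blocks $A$ and $D$ are themselves M-matrices, so their off-diagonal entries are nonpositive, while their diagonal entries $a_{ii}$ and $d_{jj}$ are positive.

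Next I would split the verification into off-diagonal and diagonal entries. For the off-diagonal part, since $\alpha \ge 0$ and the off-diagonal entries of $A$ are $\le 0$, the off-diagonal entries of $-\alpha A$, hence of $A_{-\alpha} = I - \alpha A$, are $\ge 0$. For the diagonal, $(A_{-\alpha})_{ii} = 1 - \alpha a_{ii}$; the standing hypothesis $0 \le \alpha \le \min_i a_{ii}^{-1}$ gives $\alpha a_{ii} \le 1$ for every $i$, so $(A_{-\alpha})_{ii} \ge 0$. Combining both parts yields $A_{-\alpha} \ge 0$, and the argument for $D_{-\beta} \ge 0$ is identical, using $0 \le \beta \le \min_j d_{jj}^{-1}$.

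There is no real obstacle here; the lemma is a routine consequence of the admissibility constraints on $\alpha,\beta$ stated when the initial iterates \eqref{eq:initial-matrix} were introduced, together with the sign pattern of the M-matrices $A$ and $D$. The only thing to be careful about is flagging that the hypothesis $\max\{\alpha,\beta\}>0$ plays no role in this particular lemma — it is the individual bounds $\alpha \le \min_i a_{ii}^{-1}$ and $\beta \le \min_j d_{jj}^{-1}$ that do all the work.
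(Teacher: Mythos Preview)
Your proposal is correct and follows essentially the same approach as the paper: both arguments split into off-diagonal and diagonal entries, use the M-matrix sign pattern of $A$ for the former, and the bound $\alpha \le \min_i a_{ii}^{-1}$ for the latter. Your added remark that $\max\{\alpha,\beta\}>0$ is irrelevant here is accurate and not present in the paper's proof, but otherwise the two arguments are the same.
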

\begin{proof}
	Since $W$ is a nonsingular or an irreducible singular M-matrix, 
	then $A$ is a nonsingular M-matrix, implying  $a_{ij}\le 0$ with $i\neq j$ and $a_{ii} > 0$. 
	Thus  in $I-\alpha A$, the off-diagonal entries $-\alpha a_{ij}$ are nonnegative. For the diagonal elements, because  $\alpha \le \min_i a_{ii}^{-1} = \left(\max_i a_{ii}\right)^{-1}$, then we have 
	$\alpha a_{ii} \le  a_{ii} \left(\max_i a_{ii}\right)^{-1} \le 1$, showing that $1-\alpha a_{ii} \ge 0$. Hence $A_{-\alpha}\equiv I-\alpha A$ is nonnegative. Similarly, we can show that $D_{-\beta} = I-\beta D$ is nonnegative. 	
\end{proof}

The nonnegativity of $B_l$, $B_r$, $C_l$, $C_r$, $A_\beta^{-1}$ and $ D_\alpha^{-1}$ leads to that of $U_0$,  $ Q_0$, $W_0$, $V_0$, $Y_0$ and  $Z_0$. Then by Lemmas~\ref{lm:schur-complement} and~\ref{lm:ADnonnegative}, we know that 
$A_{\alpha, \beta}= A_\beta^{-1}A_{-\alpha} \ge 0$ and $D_{\alpha,\beta}=D_\alpha^{-1} D_{-\beta} \ge 0$.  
Furthermore, it holds that  
$U_k=A_{\alpha,\beta}^{k} U_{0}\ge 0$, 
$V_k=(A_{\alpha,\beta}^{\T})^k V_{0}\ge 0$, 
$W_k=D_{\alpha,\beta}^{k} W_{0} \ge 0$ and $Q_k= (D_{\alpha,\beta}^{\T})^k Q_{0} \ge 0$. 
Also, we have   $S_k\ge 0$, $T_k\ge 0$, $Y_k\ge0$ and $Z_k\ge 0$. 

\begin{lemma}\label{lm:M-matrix-initial}
	The kernels  $I-Y_0Z_0$ and $I-Z_0 Y_0$ are nonsingular M-matrices. 
\end{lemma}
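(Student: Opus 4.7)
The plan is to exhibit $I - Y_0 Z_0$ and $I - Z_0 Y_0$ as iterated Schur complements of a single larger Z-matrix that can be shown to be a nonsingular M-matrix by appealing to Lemmas~\ref{lm:initial-M} and~\ref{lm:schur-complement}. Specifically, I would introduce the $(n+m+q+p) \times (n+m+q+p)$ block matrix
\[
\widetilde{M} := \begin{bmatrix} D_\alpha & 0 & -C_l & 0 \\ 0 & A_\beta & 0 & -B_l \\ 0 & -\beta C_r^{\T} & I_q & 0 \\ -\alpha B_r^{\T} & 0 & 0 & I_p \end{bmatrix},
\]
which is a Z-matrix because $B_l, B_r, C_l, C_r$ are nonnegative and $\alpha, \beta \ge 0$.

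The first step is a direct block computation: using the factorizations $B = B_l B_r^{\T}$ and $C = C_l C_r^{\T}$, the Schur complement of $\widetilde{M}$ with respect to its bottom-right identity block $I_{q+p}$ collapses exactly to $\begin{bmatrix} D_\alpha & -\beta C \\ -\alpha B & A_\beta \end{bmatrix}$, which is a nonsingular M-matrix by Lemma~\ref{lm:initial-M}. I would then invoke the classical characterization that a Z-matrix block-partitioned as $\begin{bmatrix} P_{11} & P_{12} \\ P_{21} & P_{22} \end{bmatrix}$ is a nonsingular M-matrix if and only if one of its diagonal blocks together with the associated Schur complement are both nonsingular M-matrices. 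Since $I_{q+p}$ is trivially a nonsingular M-matrix, this yields that $\widetilde{M}$ is itself a nonsingular M-matrix. Pivoting the other way, the Schur complement of $\widetilde{M}$ with respect to the top-left block $\mathrm{diag}(D_\alpha, A_\beta)$, which is a nonsingular M-matrix by Lemma~\ref{lm:schur-complement}, works out to $\begin{bmatrix} I_q & -Z_0 \\ -Y_0 & I_p \end{bmatrix}$, and hence this matrix is a nonsingular M-matrix by the forward direction of the same characterization. A final Schur complementation, taken with respect to either $I_p$ or $I_q$, then delivers $I_p - Y_0 Z_0$ and $I_q - Z_0 Y_0$ as nonsingular M-matrices, as desired.

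The main obstacle is purely organizational: one must arrange the block ordering of $\widetilde{M}$ so that both Schur complements land on named quantities, namely the matrix of Lemma~\ref{lm:initial-M} on one side and the $(p+q) \times (p+q)$ matrix carrying $-Y_0$ and $-Z_0$ as its off-diagonal blocks on the other. Once that bookkeeping is set, the nonsingular M-matrix property propagates cleanly through each Schur-complement step, with no need for spectral-radius or Perron--Frobenius arguments.
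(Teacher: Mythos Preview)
Your proof is correct and takes a genuinely different route from the paper's. The paper argues directly at the level of inverses: it uses the Sherman--Morrison--Woodbury expansions~\eqref{eq:schur1}--\eqref{eq:schur2} already derived in the text to compute
\[
\beta C_r^{\T}(A_\beta-\alpha\beta BD_\alpha^{-1}C)^{-1}B_l = Z_0(I-Y_0Z_0)^{-1}\ge 0,
\]
then multiplies by $Y_0\ge 0$ and adds $I$ to conclude $(I-Y_0Z_0)^{-1}\ge 0$, whence $I-Y_0Z_0$ is a nonsingular M-matrix via the inverse-positivity characterization. Your argument instead embeds both kernels as iterated Schur complements of a single $(n+m+q+p)\times(n+m+q+p)$ Z-matrix $\widetilde{M}$ and appeals to the block characterization of nonsingular M-matrices; the two Schur complements of $\widetilde{M}$ land exactly on the matrix of Lemma~\ref{lm:initial-M} and on $\begin{bmatrix} I_q & -Z_0 \\ -Y_0 & I_p \end{bmatrix}$, as you check. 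The paper's approach has the advantage of reusing the SMWF identities already in hand and stays close to the explicit kernel formulas used later, while your approach is more structural, avoids the somewhat ad hoc step of passing from $Z_0(I-Y_0Z_0)^{-1}\ge 0$ to $(I-Y_0Z_0)^{-1}\ge 0$, and packages both kernels at once via the intermediate matrix $\begin{bmatrix} I_q & -Z_0 \\ -Y_0 & I_p \end{bmatrix}$.
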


\begin{proof}
It follows from Lemma~\ref{lm:schur-complement} that 
\begin{align*}
	(A_\beta- \alpha  \beta B D_\alpha^{-1}C)^{-1} \ge 0, \quad 
	(D_\alpha - \alpha \beta C A_\beta^{-1}B)^{-1}\ge 0. 
\end{align*}
Furthermore, by~\eqref{eq:schur1} and~\eqref{eq:schur2}   
it  holds that 
\begin{align*}
	0 &\le \beta C_r^{\T} (A_\beta - \alpha  \beta B D_\alpha^{-1}C)^{-1} B_l
	= \beta C_r^{\T}\left[A_\beta^{-1} + \beta U_0 (I-Y_0 Z_0)^{-1} Y_0 V_0^{\T} \right] B_l
	\\
	& = Z_0  + Z_0(I-Y_0 Z_0)^{-1} Y_0 Z_0 
	\equiv Z_0 (I-Y_0 Z_0)^{-1}, 
	\\
	0 & \le \alpha B_r^{\T} (D_\alpha - \alpha \beta C A_\beta^{-1}B)^{-1} C_l
	= \alpha B_r^{\T} \left[ D_\alpha^{-1} + \alpha W_0 (I-Z_0 Y_0)^{-1}Z_0 Q_0^{\T}\right]C_l
	\\
	&= Y_0 + Y_0(I-Z_0 Y_0)^{-1} Z_0 Y_0
	\equiv Y_0(I-Z_0 Y_0)^{-1}.
\end{align*}
Because $Y_0\ge 0$ and $Z_0\ge 0$,  we have $Y_0 Z_0 (I-Y_0 Z_0)^{-1}\ge 0$ 
and $Z_0Y_0(I-Z_0 Y_0)^{-1}\ge 0$, implying  
\begin{align*}
	I + Y_0 Z_0 (I-Y_0 Z_0)^{-1} \equiv (I-Y_0 Z_0)^{-1}\ge 0, \quad \,  
	I + Z_0 Y_0(I-Z_0 Y_0)^{-1}  \equiv (I-Z_0 Y_0)^{-1} \ge 0.
\end{align*}
Hence $I-Y_0Z_0$ and $I-Z_0 Y_0$ are nonsingular M-matrices 
due to the fact that a nonsingular Z-matrix is an M-matrix if and 
only if its inverse is nonnegative~\cite{bermanP1994nonnegative}. 
\end{proof}

The following theorem concerns the kernels  $I-Y_kZ_k$ and $I-Z_kY_k$  in the dADDA. 

\begin{theorem}\label{thm:m-matrix}
The kernels  $I-Y_k Z_k$ and $I-Z_kY_k$ are nonsingular M-matrices for all $k\ge 0$.
\end{theorem}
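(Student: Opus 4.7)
The plan is an induction on $k$. The base case $k=0$ is exactly Lemma~\ref{lm:M-matrix-initial}, so I focus on the inductive step. Assume $(I-Y_{k-1}Z_{k-1})^{-1}\ge 0$ and $(I-Z_{k-1}Y_{k-1})^{-1}\ge 0$, and aim to show $(I-Y_kZ_k)^{-1}\ge 0$; the companion claim for $I-Z_kY_k$ will then follow from the identity $(I-Z_kY_k)^{-1}=I+Z_k(I-Y_kZ_k)^{-1}Y_k$ together with $Y_k,Z_k\ge 0$, noting that both $I-Y_kZ_k$ and $I-Z_kY_k$ are Z-matrices since $Y_k,Z_k\ge 0$.

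Using \eqref{eq:YZ}, I would first expand
$$I-Y_kZ_k=\begin{bmatrix} I-Y_{k-1}Z_{k-1} & -\gamma Y_{k-1}S_{k-1} \\ -\gamma T_{k-1}Z_{k-1} & I-Y_{k-1}Z_{k-1}-\gamma^2 T_{k-1}S_{k-1}\end{bmatrix}.$$
A block $LDU$ factorization about the $(1,1)$ block, invertible with nonnegative inverse by hypothesis, produces unit block-triangular factors whose inverses have all entries nonnegative, since their only nontrivial entries are products of $T_{k-1},Y_{k-1},Z_{k-1},S_{k-1}$ and $(I-Y_{k-1}Z_{k-1})^{-1}$. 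Using the standard identity $I+Z_{k-1}(I-Y_{k-1}Z_{k-1})^{-1}Y_{k-1}=(I-Z_{k-1}Y_{k-1})^{-1}$, the Schur complement simplifies to
$$L_S=(I-Y_{k-1}Z_{k-1})-\gamma^2 T_{k-1}(I-Z_{k-1}Y_{k-1})^{-1}S_{k-1},$$
and the task reduces to showing that $L_S$ is a nonsingular M-matrix.

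The decisive step is to factor $L_S=(I-Y_{k-1}Z_{k-1})(I-K)$ with
$$K:=\gamma^2(I-Y_{k-1}Z_{k-1})^{-1}T_{k-1}(I-Z_{k-1}Y_{k-1})^{-1}S_{k-1}\ge 0,$$
and to bound $\rho(K)$. Writing $K=BA$ with $A=\check U_{k-1}$ and $B$ the remaining factors, Sylvester's identity $\rho(BA)=\rho(AB)$, together with $T_{k-1}=\check Q_{k-1}^{\T}\check W_{k-1}$, $S_{k-1}=\check V_{k-1}^{\T}\check U_{k-1}$, and the decoupled-form expressions for $H_{k-1},G_{k-1}$ from Theorem~\ref{thm:decoupled-form-for-the-adda}, would yield $\rho(K)=\rho(H_{k-1}G_{k-1})$. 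Since \cite{xueL2017highly} ensures that $I-H_{k-1}G_{k-1}$ is a nonsingular M-matrix with $H_{k-1}G_{k-1}\ge 0$, we have $\rho(K)<1$, so $I-K$ is a nonsingular M-matrix and $L_S^{-1}=(I-K)^{-1}(I-Y_{k-1}Z_{k-1})^{-1}\ge 0$. Plugging the nonnegativity of $(I-Y_{k-1}Z_{k-1})^{-1}$ and $L_S^{-1}$, together with the nonpositivity of the off-diagonal blocks, into the block-inverse formula yields $(I-Y_kZ_k)^{-1}\ge 0$, closing the induction.

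The main obstacle is the spectral-radius identity $\rho(K)=\rho(H_{k-1}G_{k-1})$: once one notices that Sylvester cycling ``inserts'' the factors $\check U_{k-1}$ and $\check V_{k-1}^{\T}$ in exactly the right places to reconstruct $H_{k-1}G_{k-1}$, the already-established M-matrix property of the large kernel $I-H_{k-1}G_{k-1}$ from the accADDA theory transfers cleanly to the small kernel $I-K$. The remaining steps are standard block-LDU bookkeeping for Z-matrices with nonnegative inverses.
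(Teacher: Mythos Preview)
Your proof is correct and follows the same inductive scaffolding as the paper: block factorization of $I-Y_kZ_k$, reduction to the Schur complement $L_S=(I-Y_{k-1}Z_{k-1})-\gamma^2 T_{k-1}(I-Z_{k-1}Y_{k-1})^{-1}S_{k-1}$, and reliance on the accADDA fact that $I-H_{k-1}G_{k-1}$ is a nonsingular M-matrix. The one technical difference lies in how that large-kernel M-matrix property is transferred to the small kernel $I-K$: you invoke the Sylvester spectral-radius identity $\rho(K)=\rho(H_{k-1}G_{k-1})<1$ and conclude via the Neumann series, whereas the paper applies the SMWF to $(I-H_{k-1}G_{k-1})^{-1}$, compresses by $\check V_{k-1}^{\T}(\cdot)\check U_{k-1}$ to obtain $S_{k-1}(I-K)^{-1}\ge 0$, and then uses the fixed-point identity $(I-K)^{-1}=I+K(I-K)^{-1}$ to deduce $(I-K)^{-1}\ge 0$. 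Your route is a shade more conceptual and avoids the intermediate compression step; the paper's route is more explicitly computational. Both rest on the same external input and arrive at the same conclusion, so the approaches should be regarded as essentially the same with a local variation at the key step.
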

\begin{proof}
	We  prove  by induction, with the case for $k=0$ from Lemma~\ref{lm:M-matrix-initial}. Assume that the result holds for $k\ge 1$, that is $(I-Y_k Z_k)^{-1}\ge 0$ and $(I-Z_kY_k)^{-1}\ge 0$,  
	we then  show  that $(I-Y_{k+1} Z_{k+1})^{-1}\ge 0$. 
	Since $I-H_k G_k$ is a nonsingular M-matrix, then with  
	\[
		K := \left[I- \gamma^2 (I-Y_k Z_k)^{-1}T_k(I-Z_k Y_k)^{-1}S_k\right]^{-1}, 
	\]
	we have  
	\begin{align*}
		0 &\le (I-H_k G_k)^{-1}
		= \left[I - \gamma^2 \check{U}_k(I-Y_k Z_k)^{-1}T_k(I-Z_k Y_k)^{-1}\check{V}_k^{\T} \right]^{-1}
		\\
		&= I + \gamma^2 \check{U}_k\left[I - \gamma^2 (I-Y_k Z_k)^{-1}T_k(I-Z_k Y_k)^{-1}\check{V}_k^{\T}\check{U}_k\right]^{-1} (I-Y_k Z_k)^{-1}T_k(I-Z_k Y_k)^{-1}\check{V}_k^{\T} 
		\\
		&=I + \gamma^2 \check{U}_k\left[I-\gamma^2(I-Y_k Z_k)^{-1}T_k(I-Z_k Y_k)^{-1}S_k\right]^{-1}(I-Y_k Z_k)^{-1}T_k(I-Z_k Y_k)^{-1}\check{V}_k^{\T}
		\\
		&\equiv 
		I+\gamma^2\check{U}_k K (I-Y_kZ_k)^{-1} T_k(I-Z_kY_k)^{-1}\check{V}_k^{\T},
		\\
		0&\le \check{V}_k^{\T}(I-H_k G_k)^{-1} \check{U}_k
		=\check{V}_k^{\T}\left[ I+\gamma^2\check{U}_k K (I-Y_kZ_k)^{-1} T_k(I-Z_kY_k)^{-1}\check{V}_k^{\T}\right]
		\check{U}_k
		\\
		& = S_k +\gamma^2 S_k K  
		(I-Y_kZ_k)^{-1}T_k(I-Z_kY_k)^{-1}S_k
		\equiv  S_k K.
	\end{align*}
	Moreover, it follows from $(I-Y_kZ_k)^{-1}\ge 0$, $T_k=\check{Q}_k^{\T}\check{W}_k\ge 0$ and  $(I-Z_k Y_k)^{-1}\ge 0$ that 
	\begin{align*}
		0\le I+	\gamma^2 (I-Y_k Z_k)^{-1}T_k(I-Z_k Y_k)^{-1}S_k K
		&\equiv  K, 
	\end{align*}
	leading to 
	\begin{align*}
		M:=\left[I-Y_kZ_k - \gamma^2 T_k(I-Z_k Y_k)^{-1}S_k \right]^{-1}  \equiv  K (I-Y_k Z_k)^{-1} \ge 0. 
	\end{align*}

	Substituting  the expression~\eqref{eq:YZ} for $Y_{k+1}$ and $Z_{k+1}$ 
	into $(I-Y_{k+1}Z_{k+1})^{-1}$, we obtain    
	\begin{align*}
		&(I-Y_{k+1} Z_{k+1})^{-1}
		= \begin{bmatrix}
			I-Y_kZ_k & - \gamma Y_k S_k 
			\\
			- \gamma T_kZ_k & I-Y_k Z_k - \gamma^2 T_kS_k
		\end{bmatrix}^{-1}
		\\
		=& \begin{bmatrix}
			I-Y_kZ_k & - \gamma Y_kS_k
			\\
			0&I-Y_kZ_k- \gamma^2 T_k(I-Z_kY_k)^{-1}S_k
		\end{bmatrix}^{-1}
		\begin{bmatrix}
			I & 0\\
			\gamma T_kZ_k(I-Y_kZ_k)^{-1} & I
		\end{bmatrix}
		\\
		=&\begin{bmatrix}
			(I-Y_kZ_k)^{-1} &\gamma (I-Y_kZ_k)^{-1}Y_kS_kM
			\\
			0 & M
		\end{bmatrix}
		\begin{bmatrix}
			I & 0\\
			\gamma T_kZ_k(I-Y_kZ_k)^{-1} & I
		\end{bmatrix}.
	\end{align*}
	Since $(I-Y_k Z_k)^{-1}\ge 0$, $Y_k\ge 0$, $S_k\ge0$, 
	$M\ge 0$, $T_k\ge 0$ and $Z_k\ge 0$, then   
	$(I-Y_{k+1} Z_{k+1})^{-1}\ge 0$, implying that $I-Y_{k+1}Z_{k+1}$ is a nonsingular M-matrix, thus so is  $I-Z_{k+1}Y_{k+1}$.

\end{proof}

\subsection{GTH-like algorithm and triplet representations}\label{ssec:triple-representations}

Firstly, we briefly sketch the GTH-like algorithm presented in~\cite{alfaXY2002accurate}, which solves the M-matrix linear system $M x=b$ in high accuracy, 
where each entry of the solution  $x$   have almost full relative accuracy. 
Given the triplet representation of the nonsingular M-matrix $M$,  
the GTH-like algorithm, 
a variation of  elementary row operations in the  Gaussian elimination  
without pivoting, computes the LU factorization  
with  high relative componentwise accuracy   
since computations are cancellation-free.   

Let $M\in \mathbb{R}^{n\times n}$ be a nonsingular M-matrix and $(N_M, u_M, v_M)$ be its triplet representation, where $N_M$ is the off-diagonal part of $-M$, $u_M>0$ and $v_M = M u_M\ge 0$. Obviously, we have $N_M\ge 0$ and the diagonal part of $M$ can be determined in a cancellation-free way: 
\begin{align}\label{eq:Mii}
	M_{ii} = \frac{(v_M)_i + \sum_{j\neq i} (N_M)_{ij} (u_M)_j}{(u_M)_i}.
\end{align}
It can be verified that  $M^{(k)}\in \mathbb{R}^{(n-k)\times(n-k)}$, the coefficient matrix after $k$ Gaussian eliminations, is still a nonsingular M-matrix. Moreover, the triplet representation of $M^{(k)}$ can be constructed from that of $M^{(k-1)}$, with $M^{(0)} = M$. 
As a result, based on~\eqref{eq:Mii}, one can  compute the LU factorization of $M$  cancellation-free. 
We outline the GTH-like algorithm from~\cite{alfaXY2002accurate} in Algorithm~\ref{alg:GTH-like}.

\begin{algorithm}[h]
	\caption{GTH-like algorithm for solving $Mx=b$}\label{alg:GTH-like}
	\hspace*{0.02in}{\bf Input:}
	the triplet representation $(N_M,u_M, v_M)$ and vector $b$. 
	\\
	\hspace*{0.02in}{\bf Output:}
	$x=M^{-1}b$. 
	\begin{algorithmic}[1]
		\State set $L=I_n$, $U=-N_M\le 0$;
		\For{$k=1:1:n$}
		\State $U_{k,k} =\left[ (v_M)_k - U_{k,k+1:n}(u_M)_{k+1:n}\right] / (u_M)_k$;
		\State $L_{k+1:n} = U_{k+1:n} / U_{k,k}$;
		\State $U_{k+1:n,k} = 0$;
		\State $U_{k+1:n,k+1:n} = U_{k+1:n,k+1:n} -L_{k+1:n,k} U_{k,k+1:n}$;
		\State set the diagonal of $U_{k+1:n, k+1:n}$ as $0$;
		\State $(v_M)_{k+1:n} = (v_M)_{k+1:n} - (v_M)_k L_{k+1:n,k}$;
		\\
		\EndFor{\bf end}
		\State solve $Ly=b$ with forward substitution;
		\State solve $Ux=y$ with backward substitution.
	\end{algorithmic}
\end{algorithm}

Note that when $b\ge 0$, no subtraction occurs in the forward and backward substitutions, thus the whole  solution process is  cancellation-free, leading to  full accuracy for all  entries of $x$. For the detailed analysis for the GTH-like algorithm, please refer to~\cite{alfaXY2002accurate, xueXL2012accurateSylvester}.      

The computational complexity of Algorithm~\ref{alg:GTH-like} is $\bigO(n^3)$  and the dominant cost lies in line~6. Hence Algorithm~\ref{alg:GTH-like} is efficient for M-matrix linear systems of medium sizes. 
However, for large-scale problems with some special structures 
	like a banded matrix  or a rank-one update of a nonsingular diagonal matrix,  its complexity may be reduced.  
For example, where $M$ is banded with the maximum number of nonzero elements on each row and column being $c$, the computational complexity of Algorithm~\ref{alg:GTH-like} will be reduced to $\bigO(cn)$.

\begin{remark}\label{rk:GTH-for-lowrank}
	Let  $M=D_M - ab^{\T}$ with $D_M$ being diagonal and $a, b>0$,  then it  takes $\bigO(n^2)$ flops to get $x$ applying Algorithm~\ref{alg:GTH-like}:  the  apparent computational complexity for the LU factorization is $\bigO(n^2)$ since for all $k\ge0$ the off-diagonal part of $M^{(k)}$ are rank-one updates with some diagonal matrices; the forward and backward steps obviously require $\bigO(n^2)$ flops. In fact, for large-scale problems such complexity is far from satisfied. 
	Fortunately, the Sherman-Morrison formula  can  provide  a perfect remedy: 
	it follows from $v_M = D_Mu_M-(b^{\T}u_M)a$ that  $b^{\T}u_M(1-b^{\T}D_M^{-1}a) = b^{\T}D_M^{-1}v_M$, suggesting $1-b^{\T}D_M^{-1}a = \frac{b^{\T}D_M^{-1}v_M}{b^{\T}u_M}$; then it holds that 
	\begin{align*}
		M^{-1}&=
		D_M^{-1} + \frac{1}{1-b^{\T}D_M^{-1}a}(D_M^{-1}a)(b^{\T}D_M^{-1})
		\equiv
		D_M^{-1} + \frac{b^{\T}u_M}{b^{\T}D_M^{-1}v_M} (D_M^{-1}a)(D_M^{-1}b)^{\T},
	\end{align*}
indicating that the whole process is cancellation-free  and  thus highly accurate.   More importantly, the complexity is reduced to $\bigO(n)$.  In fact, the above technique can be  extended to the  low-rank structures, that is, $M=D_M + U_MV_M^{\T}$ with $D_M$ being diagonal and $U_M, V_M\in \mathbb{R}^{n\times r}$, $r\ll n$, 
	and the complexity remains $\bigO(n)$. 
	The computational process is  similar to that for $r=1$  and we omit the details.  
\end{remark}

In Section~\ref{ssec:m-matrices}, we demonstrate that the kernels $I-Y_kZ_k$ and $I-Z_kY_k$ are nonsingular M-matrices. 
To solve the associated M-matrix linear  systems with the GTH-like algorithm \cite{alfaXY2002accurate,nguyenP2015componentwise}, one needs 
 the triplet representations for  those kernels. 

From the triplet representation of $W$, we have $W \begin{bmatrix}
	u_1\\  u_2
	\end{bmatrix}=\begin{bmatrix}
	v_1 \\  v_2
\end{bmatrix}$, or equivalently  
\begin{align*}
	D u_1 = v_1 + C u_2, \qquad 
	A u_2 = v_2 + B u_1,
\end{align*}
which leads to 
\begin{align}\label{eq:triple-repreAD}
	D_\alpha u_1 = \alpha v_1 +  u_1+ \alpha Cu_2\ge  0, \qquad 
	A_\beta u_2 =\beta v_2 +  u_2 + \beta B u_1\ge 0.
\end{align}
Consequently, with $N_{\Theta}=\mathrm{diag}(\Theta)-\Theta$, 
\eqref{eq:triple-repreAD} gives  the triplet representations of the nonsingular M-matrices  $D_\alpha$ and $A_\beta$, respectively: 
\begin{align}\label{eq:triple-repreAD1}
	(N_{D_\alpha}, u_1,\alpha v_1+ u_1 + \alpha C u_2),  
	\qquad 
	(N_{A_\beta}, u_2, \beta v_2 +  u_2 +\beta Bu_1). 
\end{align}
These triplet representations enable the 
GTH-like algorithm \cite{alfaXY2002accurate,nguyenP2015componentwise} to calculate 
$U_j, V_j, W_j, Q_j$ and $T_j, S_j$, $Y_j$, $Z_j$ in high accuracy without cancellations, 
for $j\ge 0$. Recall from the structure of $W$ and the properties of M-matrices, it is obvious that $N_{D_\alpha}$ and $N_{A_\beta}$ are nonnegative. 

\begin{remark}\label{rk:GTH-trans}
	Note that the GTH-like algorithm works via the LU 
	factorizations of $D_\alpha$ and $A_\beta$, 
	which can be applied to obtain 	$Q_j$ and $V_j$ accurately. 
\end{remark}

\begin{theorem}\label{thm:triple-representation-initial}
	It holds that $(I-Y_0 Z_0) B_r^{\T} u_1\ge 0$ and 
	$(I-Z_0 Y_0) C_r^{\T} u_2 \ge 0$. 
\end{theorem}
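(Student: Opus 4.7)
The plan is to reduce each of the two inequalities to a statement about a Schur complement applied to a known positive vector, and then verify that statement using the two triplet identities \eqref{eq:triple-repreAD} for $W$.

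For the first inequality, I substitute $Y_0 = \alpha B_r^{\T} D_\alpha^{-1} C_l$ and $Z_0 = \beta C_r^{\T} A_\beta^{-1} B_l$ together with $B = B_l B_r^{\T}$ and $C = C_l C_r^{\T}$. After factoring, the left-hand side collapses to
\[
(I - Y_0 Z_0) B_r^{\T} u_1
= B_r^{\T}\bigl[I - \alpha\beta D_\alpha^{-1} C A_\beta^{-1} B\bigr] u_1
= B_r^{\T} D_\alpha^{-1}\bigl[D_\alpha - \alpha\beta C A_\beta^{-1} B\bigr] u_1.
\]
Since $B_r \ge 0$ by assumption and $D_\alpha^{-1} \ge 0$ by Lemma~\ref{lm:schur-complement}, it is enough to show that the Schur complement, evaluated at $u_1$, is nonnegative, i.e.\ $[D_\alpha - \alpha\beta C A_\beta^{-1} B] u_1 \ge 0$.

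To see this, I rearrange the second identity in \eqref{eq:triple-repreAD} as $\beta A_\beta^{-1} B u_1 = u_2 - A_\beta^{-1}(\beta v_2 + u_2)$ and substitute into the first identity (which says $D_\alpha u_1 - \alpha C u_2 = \alpha v_1 + u_1$) to obtain
\[
[D_\alpha - \alpha\beta C A_\beta^{-1} B]u_1 = \alpha v_1 + u_1 + \alpha C A_\beta^{-1}(\beta v_2 + u_2).
\]
Each term on the right is nonnegative: $\alpha, \beta \ge 0$ and $\max\{\alpha,\beta\} > 0$, $u_1, u_2 > 0$, $v_1, v_2 \ge 0$ come from the triplet representation of $W$, while $C \ge 0$ (the off-diagonal block of $-W$) and $A_\beta^{-1} \ge 0$ (Lemma~\ref{lm:schur-complement}). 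Multiplying on the left by the nonnegative matrix $B_r^{\T} D_\alpha^{-1}$ finishes the first claim.

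The second inequality $(I - Z_0 Y_0) C_r^{\T} u_2 \ge 0$ is obtained by exactly the same manoeuvre with the two blocks swapped: the left-hand side rewrites as $C_r^{\T} A_\beta^{-1}[A_\beta - \alpha\beta B D_\alpha^{-1} C] u_2$, and using the first triplet identity one finds $[A_\beta - \alpha\beta B D_\alpha^{-1} C] u_2 = \beta v_2 + u_2 + \beta B D_\alpha^{-1}(\alpha v_1 + u_1) \ge 0$. I don't anticipate any genuine obstacle; the only conceptual step is recognising that $(I - Y_0 Z_0)B_r^{\T} u_1$ is really $B_r^{\T} D_\alpha^{-1}$ applied to a Schur complement evaluated at $u_1$, after which nonnegativity is forced by the identities in \eqref{eq:triple-repreAD}.
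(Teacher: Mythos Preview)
Your argument is correct. The factorisation
\[
(I-Y_0Z_0)B_r^{\T}u_1 = B_r^{\T}D_\alpha^{-1}\bigl[D_\alpha - \alpha\beta\, C A_\beta^{-1} B\bigr]u_1
\]
is valid, and the substitution using \eqref{eq:triple-repreAD} yields exactly $\alpha v_1 + u_1 + \alpha C A_\beta^{-1}(\beta v_2 + u_2)\ge 0$ as you claim. If you expand your final expression using $Q_0^{\T}=B_r^{\T}D_\alpha^{-1}$, $V_0^{\T}=C_r^{\T}A_\beta^{-1}$ and $Y_0=\alpha B_r^{\T}D_\alpha^{-1}C_l$, you recover precisely the right-hand side $Q_0^{\T}u_1+\alpha Q_0^{\T}v_1 + Y_0(V_0^{\T}u_2+\beta V_0^{\T}v_2)$ of \eqref{eq:triple-initial-3}, so nothing is lost.

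The paper takes a structurally different route: it assembles the coupled $2\times 2$ block identity
\[
\begin{bmatrix}-Y_0 & I\\ I & -Z_0\end{bmatrix}
\begin{bmatrix}C_r^{\T}u_2\\ B_r^{\T}u_1\end{bmatrix}
=
\begin{bmatrix}Q_0^{\T}u_1+\alpha Q_0^{\T}v_1\\ V_0^{\T}u_2+\beta V_0^{\T}v_2\end{bmatrix}\ge 0,
\]
and then left-multiplies by $\begin{bmatrix}I & Y_0\\ Z_0 & I\end{bmatrix}$ to read off both inequalities at once. Your Schur-complement computation is a cleaner and more elementary proof of the bare statement of the theorem. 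The advantage of the paper's formulation is that the block identity \eqref{eq:triple-initial-1} is the base case of a recursive pattern: the same ``assemble a block system, then pre-multiply'' manoeuvre, with $Y_k,Z_k$ in place of $Y_0,Z_0$, drives the inductive proof of Theorem~\ref{thm:triple-repre}. So while your proof is tidier here, the paper's version is chosen because its shape generalises to all $k$.
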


\begin{proof}
	Since 
	\begin{align*}
		W	
		\begin{bmatrix}
			u_1 \\ u_2	
		\end{bmatrix}
		=
		\begin{bmatrix}
			D & -C \\ -B & A
			\end{bmatrix}\begin{bmatrix}
			u_1 \\ u_2 
			\end{bmatrix}=\begin{bmatrix}
			v_1 \\ v_2
		\end{bmatrix},
	\end{align*}
	we then have 
	\begin{align*}
		\begin{bmatrix}
			D_{-\beta} & \alpha C \\ \beta B & A_{-\alpha} 
			\end{bmatrix} \begin{bmatrix}
			u_1 \\ u_2
		\end{bmatrix} =
		\begin{bmatrix}
			D_\alpha  & -\beta C \\ -\alpha B & A_\beta 
			\end{bmatrix} \begin{bmatrix}
			u_1 \\ u_2
			\end{bmatrix} - \gamma \begin{bmatrix}
			v_1 \\ v_2
		\end{bmatrix},
	\end{align*}
	or equivalently  
	\begin{align}
		\gamma D_\alpha^{-1} C u_2 + \gamma D_\alpha^{-1} v_1 
		&=
		u_1 - D_\alpha^{-1} D_{-\beta} u_1, \label{eq:u1-to-u2}
		\\
		\gamma A_\beta^{-1} B u_1 + \gamma A_\beta^{-1} v_2
		&=u_2 -  A_\beta^{-1} A_{-\alpha}  u_2.\label{eq:u2-to-u1}
	\end{align}
	Pre-multiplying $\alpha B_r^{\T}$ and $\beta C_r^{\T}$, respectively,  
	on both sides of \eqref{eq:u1-to-u2} and \eqref{eq:u2-to-u1}, then by \eqref{eq:initial-decoupled-1} we have 
	\begin{align*}
		\gamma Y_0  C_r^{\T} u_2 + \alpha \gamma  Q_0^{\T} v_1 
		&=
		\alpha B_r^{\T} u_1 - \alpha B_r^{\T} D_\alpha^{-1} D_{-\beta} u_1, 
		\\
		\gamma Z_0 B_r^{\T} u_1 + \beta \gamma V_0^{\T}v_2
		&=\beta C_r^{\T} u_2 -\beta C_r^{\T} A_\beta^{-1} A_{-\alpha} u_2. 
	\end{align*}
	These are  further   equivalent to 
	\begin{align}
		\gamma B_r^{\T}u_1 - \gamma Y_0  C_r^{\T} u_2 
		&=
		\beta B_r^{\T} u_1 +  \alpha \gamma  Q_0^{\T} v_1  +  \alpha B_r^{\T} D_\alpha^{-1} D_{-\beta} u_1, \label{eq:u1-to-u2-2}
		\\
		\gamma C_r^{\T}u_2 - \gamma Z_0 B_r^{\T} u_1
		&=\alpha C_r^{\T} u_2 + \beta \gamma V_0^{\T}v_2
		+\beta C_r^{\T} A_\beta^{-1} A_{-\alpha} u_2.\label{eq:u2-to-u1-2}
	\end{align}
	Now we rewrite \eqref{eq:u1-to-u2-2} and \eqref{eq:u2-to-u1-2} as 
	\begin{equation}\label{eq:triple-initial-1}
		\begin{aligned}
			\begin{bmatrix}
				-Y_0 & I \\ I & -Z_0
			\end{bmatrix}
			\begin{bmatrix}
				C_r^{\T}u_2 \\ B_r^{\T} u_1
			\end{bmatrix}
			=& 
			\frac{1}{\gamma}
			\begin{bmatrix}
				\beta B_r^{\T} u_1 +  \alpha \gamma  Q_0^{\T} v_1  +  \alpha B_r^{\T} D_\alpha^{-1} D_{-\beta} u_1
				\\
				\alpha C_r^{\T} u_2 + \beta \gamma V_0^{\T}v_2
				+\beta C_r^{\T} A_\beta^{-1} A_{-\alpha} u_2
			\end{bmatrix}
			\\
			=& 
			\begin{bmatrix}
				\alpha Q_0^{\T} v_1 \\ \beta V_0^{\T} v_2
			\end{bmatrix}
			+
			\begin{bmatrix}
				B_r^{\T} D_\alpha^{-1} u_1
				\\
				C_r^{\T} A_\beta^{-1} u_2
			\end{bmatrix}
			\equiv
			\begin{bmatrix}
				Q_0^{\T}u_1 + \alpha Q_0^{\T} v_1 
				\\ 
				V_0^{\T}u_2 + \beta V_0^{\T} v_2
			\end{bmatrix}
			\ge  0. 
		\end{aligned}
	\end{equation}
	Pre-multiplying  \eqref{eq:triple-initial-1} by $\begin{bmatrix}
		I & Y_0 \\ Z_0 & I
	\end{bmatrix}$,  
	it  shows that 
	\begin{align}\label{eq:triple-initial-2}
		\begin{bmatrix}
			0 & I-Y_0 Z_0 \\ I-Z_0 Y_0 & 0
		\end{bmatrix}
		\begin{bmatrix}
			C_r^{\T} u_2 \\ B_r^{\T} u_1
		\end{bmatrix}
		=
		\begin{bmatrix}
			I & Y_0 \\ Z_0 & I
		\end{bmatrix}
		\begin{bmatrix}
			Q_0^{\T}u_1 + \alpha Q_0^{\T} v_1 
			\\ 
			V_0^{\T}u_2 + \beta V_0^{\T} v_2
		\end{bmatrix}
		\ge  0,
	\end{align}
	implying the results we want to prove.
\end{proof}

With $B_r\ge 0$ and $C_r\ge 0$  of full column rank, 
 we have $B_r^{\T}u_1 > 0$ and $C_r^{\T}u_2 > 0$. 
Hence  from~\eqref{eq:triple-initial-2}, we 
   obtain  the triplet representations of $I-Y_0 Z_0$ 
and $I-Z_0 Y_0$, respectively:  
\begin{equation}\label{eq:triple-initial-3}
	\begin{aligned}
	&
	\left(N_{I-Y_0 Z_0}, B_r^{\T}u_1, Q_0^{\T}u_1 + \alpha Q_0^{\T}v_1  + Y_0 (V_0^{\T}u_2 + \beta V_0^{\T}v_2) \right),
	\\
	&
	\left(N_{I-Z_0Y_0}, C_r^{\T}u_2,  V_0^{\T}u_2 + \beta V_0^{\T}v_2 
	+ Z_0 (Q_0^{\T}u_1+\alpha Q_0^{\T}v_1 )\right). 
\end{aligned}
\end{equation}
Moreover, from the relationships between $Y_0$ 
and $Y_1$, and $Z_0$ and $Z_1$, the triplet representations 
in~\eqref{eq:triple-initial-3} provide further clues for the 
triplet representations of $I-Y_1Z_1$ and $I-Z_1 Y_1$, 
and  those  of $I-Y_kZ_k$ and $I-Z_k Y_k$ for $k>1$. 
Specifically, since 
$\boldsymbol{1}_{2^k} \otimes B_r^{\T} u_1 >0$ and 
$\boldsymbol{1}_{2^k} \otimes C_r^{\T} u_2>0$,
 if 
\begin{equation}\label{eq:condition}
	\begin{aligned}[b]
		&(I-Y_k Z_k) (\boldsymbol{1}_{2^k} \otimes B_r^{\T} u_1)\ge 0, \quad 
		(I-Z_k Y_k) (\boldsymbol{1}_{2^k} \otimes C_r^{\T} u_2)\ge 0,
	\end{aligned}
\end{equation}
we  have   successfully   found the  triplet representations of $I-Y_k Z_k$ and $I-Z_kY_k$. 
To verify  \eqref{eq:condition}, the following theorem is necessary.  

\begin{theorem}\label{thm:nonnegative}
	For the dADDA with $u_1>0$ and $u_2>0$, 	it holds that 
	\begin{enumerate}
		\item [(\romannumeral1)] 
				$[Q_0, Q_1, \cdots,  Q_{2^k-1}]^{\T}
			u_1
			-
			\gamma T_k 
			(\boldsymbol{1}_{2^k} \otimes C_r^{\T} u_2)
			\ge 0$; and \label{item:1} 
		\item [(\romannumeral2)] 
				$[V_0, V_1,  \cdots, V_{2^k-1}]^{\T} 
			u_2
			-
			\gamma S_k 
			(\boldsymbol{1}_{2^k} \otimes B_r^{\T} u_1)
			\ge 0$. \label{item:2}
	\end{enumerate}
\end{theorem}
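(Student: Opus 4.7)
The plan is to reduce both nonnegativity statements to a single telescoping identity, derived directly from the triplet representation of $W$. Reorganising \eqref{eq:u1-to-u2} and \eqref{eq:u2-to-u1} yields the two cornerstone identities
\begin{equation*}
\gamma D_\alpha^{-1} C u_2 = (I - D_{\alpha,\beta}) u_1 - \gamma D_\alpha^{-1} v_1, \qquad
\gamma A_\beta^{-1} B u_1 = (I - A_{\alpha,\beta}) u_2 - \gamma A_\beta^{-1} v_2,
\end{equation*}
which express the ``off-diagonal'' block actions $D_\alpha^{-1}Cu_2$ and $A_\beta^{-1}Bu_1$ in terms of $(I-D_{\alpha,\beta})u_1$ and $(I-A_{\alpha,\beta})u_2$ modulo nonnegative error terms $\gamma D_\alpha^{-1}v_1, \gamma A_\beta^{-1}v_2$. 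These are precisely the objects that will telescope against the geometric-type sums implicit in $T_k$ and $S_k$.

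I would work blockwise in (i). Fix $0\le i\le 2^k-1$ and exploit $Q_i^{\T}=Q_0^{\T}D_{\alpha,\beta}^i$, $W_j=D_{\alpha,\beta}^j W_0$ with $W_0 C_r^{\T} u_2 = D_\alpha^{-1}C u_2$ to write the $i$-th block of $\gamma T_k(\boldsymbol{1}_{2^k}\otimes C_r^{\T}u_2)$ as $Q_0^{\T}D_{\alpha,\beta}^i \sum_{j=0}^{2^k-1}\gamma D_{\alpha,\beta}^j D_\alpha^{-1}Cu_2$. Substituting the first identity and invoking the telescoping $\sum_{j=0}^{2^k-1}D_{\alpha,\beta}^j(I-D_{\alpha,\beta}) = I-D_{\alpha,\beta}^{2^k}$ collapses this to $Q_0^{\T}[D_{\alpha,\beta}^i - D_{\alpha,\beta}^{i+2^k}]u_1 - \gamma Q_0^{\T}\sum_{j=i}^{i+2^k-1}D_{\alpha,\beta}^j D_\alpha^{-1}v_1$. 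Subtracting from $Q_i^{\T}u_1 = Q_0^{\T}D_{\alpha,\beta}^i u_1$ produces, after cancellation of the $Q_0^{\T}D_{\alpha,\beta}^i u_1$ terms,
\begin{equation*}
\bigl[\check{Q}_k^{\T}u_1 - \gamma T_k(\boldsymbol{1}_{2^k}\otimes C_r^{\T}u_2)\bigr]_i
= Q_0^{\T}D_{\alpha,\beta}^{i+2^k} u_1 + \gamma Q_0^{\T}\sum_{j=i}^{i+2^k-1}D_{\alpha,\beta}^j D_\alpha^{-1}v_1,
\end{equation*}
which is nonnegative since $Q_0\ge 0$, $u_1>0$, $v_1\ge 0$, $D_\alpha^{-1}\ge 0$, and $D_{\alpha,\beta}\ge 0$ by Lemmas~\ref{lm:schur-complement} and~\ref{lm:ADnonnegative}.

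Statement (ii) is obtained by the completely parallel calculation with $A\leftrightarrow D$, $\alpha\leftrightarrow\beta$, $B\leftrightarrow C$, $U\leftrightarrow W$, $V\leftrightarrow Q$, $S_k\leftrightarrow T_k$, $u_1\leftrightarrow u_2$, $v_1\leftrightarrow v_2$, invoking the second cornerstone identity in place of the first; the final blockwise expression is $V_0^{\T}A_{\alpha,\beta}^{i+2^k}u_2 + \gamma V_0^{\T}\sum_{j=i}^{i+2^k-1}A_{\alpha,\beta}^j A_\beta^{-1} v_2 \ge 0$. The main obstacle is bookkeeping: one must keep the factorisations $Q_i = (D_{\alpha,\beta}^{\T})^i Q_0$ and $W_j = D_{\alpha,\beta}^j W_0$ consistently placed so that the geometric series telescopes against the right-hand factor of the identity, and one must verify that the index range $j = i, \ldots, i+2^k-1$ in the remainder sum really does absorb both the original geometric sum and the shift contributed by $D_{\alpha,\beta}^i$. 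Once the algebra is aligned, nonnegativity follows at once from the nonnegativity facts already established in Section~\ref{ssec:m-matrices}.
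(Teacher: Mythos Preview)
Your proof is correct and is essentially the same argument as the paper's: both start from \eqref{eq:u1-to-u2}, exploit $Q_i^{\T}=Q_0^{\T}D_{\alpha,\beta}^{i}$ and $W_j=D_{\alpha,\beta}^{j}W_0$, and arrive at the identical explicit nonnegative expression $Q_{i+2^k}^{\T}u_1+\gamma\sum_{j=i}^{i+2^k-1}Q_j^{\T}D_\alpha^{-1}v_1$ (cf.\ \eqref{eq:nonnegativeT}). The only difference is cosmetic---you invoke the telescoping identity $\sum_{j=0}^{2^k-1}D_{\alpha,\beta}^{j}(I-D_{\alpha,\beta})=I-D_{\alpha,\beta}^{2^k}$ directly, whereas the paper builds up the relation \eqref{eq:QiWj} by repeated premultiplication and then sums---so the presentations coincide in substance.
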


\begin{proof}
	Pre-multiplying $Q_i^{\T}$ $(0\le i \le 2^k-1,  k\ge 0)$ 
	and $D_{\alpha,\beta}$, respectively, on  
	\eqref{eq:u1-to-u2} yields 
	\begin{align}
		\gamma Q_i^{\T}W_0 C_r^{\T}u_2 + \gamma Q_i^{\T} D_\alpha^{-1} v_1
		& = Q_i^{\T} u_1 - Q_{i+1}^{\T} u_1, \nonumber 
		\\
		\gamma W_1 C_r^{\T}u_2 + \gamma D_{\alpha,\beta} D_\alpha^{-1}v_1
		& = D_{\alpha,\beta}u_1 - D_{\alpha,\beta}^2 u_1. \label{eq:W1}
	\end{align}
Pre-multiply $Q_i^{\T}$ and $D_{\alpha,\beta}$, respectively, to~\eqref{eq:W1} and  we get 
\begin{align}
	\gamma Q_i^{\T}W_1 C_r^{\T}u_2 + \gamma Q_{i+1}^{\T} D_\alpha^{-1}v_1
	& = Q_{i+1}^{\T}u_1 - Q_{i+2}^{\T} u_1, \nonumber 
	\\
	\gamma W_2 C_r^{\T}u_2 + \gamma D_{\alpha,\beta}^2 D_\alpha^{-1}v_1
	& = D_{\alpha,\beta}^2u_1 - D_{\alpha,\beta}^3 u_1. \label{eq:W2}
\end{align}
We pursue the same process as above on \eqref{eq:W2} and obtain 
\begin{align}
	\gamma Q_i^{\T}W_2 C_r^{\T}u_2 + \gamma Q_{i+2}^{\T} D_\alpha^{-1}v_1
	& = Q_{i+2}^{\T}u_1 - Q_{i+3}^{\T} u_1, \nonumber 
	\\
	\gamma W_3 C_r^{\T}u_2 + \gamma D_{\alpha,\beta}^3 D_\alpha^{-1}v_1
	& = D_{\alpha,\beta}^3u_1 - D_{\alpha,\beta}^4 u_1. \label{eq:W3}
\end{align}
Repeating the similar procedure as above, 
we eventually acquire 
\begin{align}
	\gamma Q_i^{\T}W_j C_r^{\T}u_2 + \gamma Q_{i+j}^{\T} D_\alpha^{-1}v_1
	& = Q_{i+j}^{\T}u_1 - Q_{i+j+1}^{\T} u_1, \label{eq:QiWj}
\end{align}
where $j=0, 1, \cdots, 2^k-1$.  
Summing \eqref{eq:QiWj} for  $j=0, 1, \cdots, 2^k-1$,  we  have 
\begin{equation}\label{eq:Qi}
	\begin{aligned}
		&
		\gamma Q_i^{\T}(W_0 + W_1+ \cdots + W_{2^k-1}) C_r^{\T}u_2  
		+ \gamma (Q_i^{\T} + Q_{i+1}^{\T} + \cdots + Q_{i+2^k-1}^{\T}) D_\alpha^{-1}v_1 
		\\
		=&
		Q_i^{\T} u_1 - Q_{i+2^k}^{\T} u_1.  
	\end{aligned}
\end{equation}
Rewrite \eqref{eq:Qi} in matrix form $(0\le i \le 2^k-1)$, we obtain 
\begin{equation*}
	\begin{aligned}[b]
		&
		\gamma T_k
		\begin{bmatrix}
			C_r^{\T} u_2 \\ \vdots  \\ C_r^{\T}u_2
		\end{bmatrix}
		+ \gamma \begin{bmatrix}
			(Q_0^{\T} +\cdots + Q_{2^k-1}^{\T}) D_\alpha^{-1}v_1
			\\
			\vdots \\
			(Q_{2^k-1}^{\T} + \cdots + Q_{2^{k+1}-2}^{\T}) D_\alpha^{-1} v_1
		\end{bmatrix}
		=
		&
		\begin{bmatrix}
			Q_0^{\T}  \\ \vdots \\ Q_{2^k-1}^{\T}
		\end{bmatrix} u_1
		-
		\begin{bmatrix}
			Q_{2^k}^{\T} \\ \vdots \\ Q_{2^{k+1}-1}^{\T} 	
		\end{bmatrix}u_1. 
	\end{aligned}
\end{equation*}
This is equivalent to   
\begin{equation}\label{eq:nonnegativeT}
	\begin{aligned}[b]
		&
		\begin{bmatrix}
			Q_0^{\T}  \\ \vdots \\ Q_{2^k-1}^{\T}
		\end{bmatrix} u_1
		-
		\gamma T_k
		\begin{bmatrix}
			C_r^{\T} u_2 \\ \vdots  \\ C_r^{\T}u_2
		\end{bmatrix}
		=
		\gamma \begin{bmatrix}
			(Q_0^{\T} +\cdots + Q_{2^k-1}^{\T}) D_\alpha^{-1}v_1
			\\
			\vdots \\
			(Q_{2^k-1}^{\T} + \cdots + Q_{2^{k+1}-2}^{\T}) D_\alpha^{-1} v_1
		\end{bmatrix}
		+
		\begin{bmatrix}
			Q_{2^k}^{\T} \\ \vdots \\ Q_{2^{k+1}-1}^{\T} 	
		\end{bmatrix}u_1,
	\end{aligned}
\end{equation}
leading to the result in (\romannumeral1). 
Similarly, we get  
\begin{equation}\label{eq:nonnegativeS}
	\begin{aligned}[b]
		&
		\begin{bmatrix}
			V_0^{\T}  \\ \vdots \\ V_{2^k-1}^{\T}
		\end{bmatrix} u_2
		-
		\gamma S_k
		\begin{bmatrix}
			B_r^{\T} u_1 \\ \vdots  \\ B_r^{\T}u_1
		\end{bmatrix}
		=
		\gamma \begin{bmatrix}
			(V_0^{\T} +\cdots + V_{2^k-1}^{\T}) A_\beta^{-1}v_2
			\\
			\vdots \\
			(V_{2^k-1}^{\T} + \cdots + V_{2^{k+1}-2}^{\T}) A_\beta^{-1} v_2
		\end{bmatrix}
		+
		\begin{bmatrix}
			V_{2^k}^{\T} \\ \vdots \\ V_{2^{k+1}-1}^{\T} 	
		\end{bmatrix}u_2 \ge 0,
	\end{aligned}
\end{equation}
thus the result  in  (\romannumeral2). 
\end{proof}
 
The following part is devoted to the triplet representations of $I-Y_kZ_k$ and $I-Z_k Y_k$, 
for $k\ge 1$. We firstly compute the triplet representations of $I-Y_1Z_1$ and $I-Z_1 Y_1$.  
Define    
\begin{align*}
	P_1 = \begin{bmatrix}
		0 & 0 & I & 0 \\
		I & 0 & 0 &0  \\
		0 & I & 0 & 0 \\
		0 & 0 & 0 & I
	\end{bmatrix} \in \mathbb{R}^{2(p+q)\times 2(p+q)}.
\end{align*}	
Since 
\begin{align*}
	\begin{bmatrix}
		-Y_1 & I \\ I & -Z_1
	\end{bmatrix}
	&= 
	P_1 \begin{bmatrix}
		-\gamma T_0 & 0 & -Y_0 & I 
		\\
		0 & 0 & I & -Z_0 
		\\
		-Y_0 & I & 0 & 0 
		\\
		I & -Z_0 & 0 & -\gamma S_0
	\end{bmatrix}
	P_1^{\T}
	\\
	&=
	P_1\begin{bmatrix}
		0 & 0 & -Y_0 & I 
		\\
		0 & 0 & I & -Z_0 
		\\
		-Y_0 & I & 0 & 0 
		\\
		I & -Z_0 & 0 & 0
	\end{bmatrix}
	P_1^{\T}
	-P_1 \begin{bmatrix}
		I & 0 \\ 0 & 0 \\ 0 & 0 \\ 0 &  I
	\end{bmatrix}
	\begin{bmatrix}
		\gamma T_0  & 0 & 0 & 0 
		\\
		0 & 0 & 0 & \gamma S_0
	\end{bmatrix}
	P_1^{\T},
\end{align*}
then  by \eqref{eq:triple-initial-1} and Theorem~\ref{thm:nonnegative},   it holds that 
\begin{align*}
	&
	\begin{bmatrix}
		-Y_1 & I \\ I & -Z_1
	\end{bmatrix} P_1 
	\begin{bmatrix}
		C_r^{\T} u_2 \\ B_r^{\T} u_1
		\\
		C_r^{\T} u_2 \\ B_r^{\T} u_1
	\end{bmatrix}
	\\
	=&
	P_1 \begin{bmatrix}
		0 & 0 & -Y_0 & I 
		\\
		0 & 0 & I & -Z_0 
		\\
		-Y_0 & I & 0 & 0 
		\\
		I & -Z_0 & 0 & 0
	\end{bmatrix}
	\begin{bmatrix}
		C_r^{\T} u_2 \\ B_r^{\T} u_1
		\\
		C_r^{\T} u_2 \\ B_r^{\T} u_1
	\end{bmatrix}
	- P_1
	\begin{bmatrix}
		I & 0 \\ 0 & 0 \\ 0 & 0 \\ 0 &  I
	\end{bmatrix}
	\begin{bmatrix}
		\gamma T_0 & 0 & 0 & 0
		\\
		0 & 0 & 0 & \gamma S_0
	\end{bmatrix}
	\begin{bmatrix}
		C_r^{\T} u_2 \\ B_r^{\T} u_1
		\\
		C_r^{\T} u_2 \\ B_r^{\T} u_1
	\end{bmatrix}
	\\
	=&
	P_1 \begin{bmatrix}
		Q_0^{\T} u_1 + \alpha Q_0^{\T}v_1  
		- \gamma T_0 C_r^{\T} u_2
		\\
		V_0^{\T}u_2 + \beta V_0^{\T} v_2
		\\
		Q_0^{\T} u_1 + \alpha Q_0^{\T}v_1  
		\\
		V_0^{\T}u_2 + \beta V_0^{\T} v_2
		- \gamma S_0 B_r^{\T} u_1
	\end{bmatrix} \ge 0.
\end{align*}
Moreover,  define 
\begin{align*}
	v_1^{(1)}
	&\equiv 
	\alpha \begin{bmatrix}
		Q_0^{\T} \\ Q_0^{\T} 
		\end{bmatrix}v_1 + \begin{bmatrix}
		Q_0^{\T} \\ Q_1^{\T}
	\end{bmatrix}u_1 
	+\gamma
	\begin{bmatrix}
		0 \\ Q_0^{\T} D_\alpha^{-1}v_1
	\end{bmatrix}\ge 0,
	\\
	v_2^{(1)}
	&\equiv 
	\beta  \begin{bmatrix}
		V_0^{\T} \\ V_0^{\T} 
		\end{bmatrix} v_2 + \begin{bmatrix}
		V_0^{\T} \\ V_1^{\T}
	\end{bmatrix}u_2 
	+\gamma
	\begin{bmatrix}
		0 \\ V_0^{\T} A_\beta^{-1}v_2
	\end{bmatrix}\ge 0,
\end{align*}
it then  follows from  \eqref{eq:nonnegativeT} and \eqref{eq:nonnegativeS}  that 
\begin{equation}\label{eq:triple-repre-1-pre}
	\begin{aligned}
		&\begin{bmatrix}
			-Y_1 & I \\ I & -Z_1 
		\end{bmatrix}
		\begin{bmatrix}
			C_r^{\T} u_2 \\ C_r^{\T} u_2 \\ B_r^{\T} u_1 \\ B_r^{\T} u_1
		\end{bmatrix}
		=
		P_1 
		\begin{bmatrix}
			Q_0^{\T} u_1 + \alpha Q_0^{\T}v_1  
			- \gamma T_0 C_r^{\T} u_2
			\\
			V_0^{\T}u_2 + \beta V_0^{\T} v_2
			\\
			Q_0^{\T} u_1 + \alpha Q_0^{\T}v_1  
			\\
			V_0^{\T}u_2 + \beta V_0^{\T} v_2
			- \gamma S_0 B_r^{\T} u_1
		\end{bmatrix}
		\\
		=&
		\begin{bmatrix}
			Q_0^{\T} u_1 + \alpha Q_0^{\T}v_1
			\\
			\alpha Q_0^{\T} v_1 + \gamma Q_0^{\T} D_\alpha^{-1} v_1 + Q_1^{\T} u_1
			\\
			V_0^{\T}u_2 + \beta V_0^{\T}v_2
			\\
			\beta V_0^{\T}v_2 +  \gamma V_0^{\T} A_\beta^{-1} v_2 + V_1^{\T}u_2 
		\end{bmatrix} 
		\equiv 
		\begin{bmatrix}
			v_1^{(1)} \\ v_2^{(1)}
		\end{bmatrix}
		\ge 0. 
	\end{aligned}
\end{equation}
As a result,   pre-multiplying $\begin{bmatrix}
	I & Y_1 \\ Z_1 & I
\end{bmatrix}$ on both sides leads to
\begin{align*}
	\begin{bmatrix}
		0 & I -Y_1 Z_1 \\  I-Z_1 Y_1 & 0
	\end{bmatrix}
	\begin{bmatrix}
		C_r^{\T}u_2 \\ C_r^{\T}u_2 \\ B_r^{\T}u_1 \\ B_r^{\T}u_1
	\end{bmatrix} 
	=
	\begin{bmatrix}
		I & Y_1 \\ Z_1 & I
	\end{bmatrix}
	\begin{bmatrix}
		v_1^{(1)} \\ v_2^{(1)}
	\end{bmatrix}
	\ge 0,
\end{align*}
which is equivalent to  
\begin{align}\label{eq:triple-repre-1}
	(I-Y_1 Z_1) \begin{bmatrix}
		B_r^{\T}u_1 \\ B_r^{\T}u_1
	\end{bmatrix} = v_1^{(1)} + Y_1 v_2^{(1)}\ge 0, 
	\qquad
	&
	(I-Z_1 Y_1) \begin{bmatrix}
		C_r^{\T}u_2 \\ C_r^{\T}u_2
	\end{bmatrix} = v_2^{(1)} + Z_1 v_1^{(1)}\ge 0. 
\end{align}
Obviously, \eqref{eq:triple-repre-1} yields  the triplet representations of the nonsingular M-matrices  
$I-Y_1 Z_1$ and $I-Z_1 Y_1$, which  respectively are:  
\begin{align*}
	&
	\left(N_{I-Y_1 Z_1},  \boldsymbol{1}_{2} \otimes B_r^{\T}u_1, v_1^{(1)} +Y_1v_2^{(1)}\right), \quad
	\left(N_{I-Z_1 Y_1},  \boldsymbol{1}_{2} \otimes C_r^{\T}u_2, v_2^{(1)} +Z_1v_1^{(1)}\right).
\end{align*}

Following the process presented above, we shall obtain   
the triplet representations  
of  $I- Y_{k}Z_k $ and $I-Z_{k}Y_{k}$ for $k\ge 1$, as presented below.

\begin{theorem}\label{thm:triple-repre}
	Define 
	\begin{align*}
		v_1^{(k)} 
		&= 
		\begin{multlined}[t]
			\alpha (\boldsymbol{1}_{2^k} \otimes Q_0^{\T} v_1) + 
			[Q_0, Q_1, \cdots, Q_{2^k-1}]^{\T} u_1
			\\
			+ \gamma 
			[0, Q_0, Q_0+Q_1, \cdots, Q_0+Q_1+\dots + Q_{2^k-2}]^{\T} D_\alpha^{-1}v_1,
		\end{multlined}	
		\\
		v_2^{(k)} 
		&= 
		\begin{multlined}[t]
			\beta (\boldsymbol{1}_{2^k} \otimes V_0^{\T} v_2) + 
			[V_0, V_1, \cdots, V_{2^k-1}]^{\T} u_2
			\\
			+ \gamma 
			[0, V_0, V_0+V_1, \cdots, V_0+V_1+\dots + V_{2^k-2}]^{\T} A_\beta^{-1}v_2
		\end{multlined}
	\end{align*}
	for $k\ge 1$. Then it holds that 
	\begin{align}\label{eq:triple-repre-k-pre}
		\begin{bmatrix}
			-Y_k & I \\ I & -Z_k
		\end{bmatrix}
		\begin{bmatrix}
			\boldsymbol{1}_{2^k} \otimes C_r^{\T}u_2
			\\
			\boldsymbol{1}_{2^k} \otimes B_r^{\T}u_1
		\end{bmatrix}
		= \begin{bmatrix}
			v_1^{(k)} \\ v_2^{(k)}
		\end{bmatrix}.
	\end{align}
	Moreover, we have 
	\begin{equation}\label{eq:triple-repre-k}
		\begin{aligned}
		(I-Y_k Z_k) (\boldsymbol{1}_{2^k} \otimes B_r^{\T}u_1)
		&= v_1^{(k)} + Y_k v_2^{(k)}, 
		\\
		(I-Z_k Y_k) (\boldsymbol{1}_{2^k} \otimes C_r^{\T}u_2)
		&= v_2^{(k)} + Z_k v_1^{(k)}. 
	\end{aligned}
	\end{equation}
\end{theorem}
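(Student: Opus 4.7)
The plan is to establish \eqref{eq:triple-repre-k-pre} by induction on $k\ge 1$, since the case $k=1$ is already verified in the paragraph preceding the theorem (see \eqref{eq:triple-repre-1-pre}), and then deduce \eqref{eq:triple-repre-k} by exactly the same $\begin{bmatrix} I & Y_k \\ Z_k & I\end{bmatrix}$-multiplication trick that was used to pass from \eqref{eq:triple-repre-1-pre} to \eqref{eq:triple-repre-1}. The two main ingredients to feed into the induction are the inductive hypothesis at level $k-1$ and the identities \eqref{eq:nonnegativeT}, \eqref{eq:nonnegativeS} of Theorem~\ref{thm:nonnegative}, which describe how $T_{k-1}(\mathbf{1}_{2^{k-1}}\otimes C_r^{\T}u_2)$ and $S_{k-1}(\mathbf{1}_{2^{k-1}}\otimes B_r^{\T}u_1)$ relate to $\check{Q}_{k-1}^{\T}u_1$ and $\check{V}_{k-1}^{\T}u_2$.

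For the inductive step, I would unroll the block structure in \eqref{eq:YZ} to write
\begin{equation*}
Y_k(\mathbf{1}_{2^k}\otimes C_r^{\T}u_2) = \begin{bmatrix} Y_{k-1}(\mathbf{1}_{2^{k-1}}\otimes C_r^{\T}u_2) \\ Y_{k-1}(\mathbf{1}_{2^{k-1}}\otimes C_r^{\T}u_2) + \gamma T_{k-1}(\mathbf{1}_{2^{k-1}}\otimes C_r^{\T}u_2)\end{bmatrix},
\end{equation*}
and similarly for $Z_k(\mathbf{1}_{2^k}\otimes B_r^{\T}u_1)$. Subtracting from $\mathbf{1}_{2^k}\otimes B_r^{\T}u_1$, the top block is immediately $v_1^{(k-1)}$ by the inductive hypothesis, which I then identify with the top half of $v_1^{(k)}$ by inspection of the defining formula (the first $2^{k-1}$ block components of $v_1^{(k)}$ are precisely $v_1^{(k-1)}$). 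For the bottom block, I substitute the inductive hypothesis and then use \eqref{eq:nonnegativeT} to replace $-\gamma T_{k-1}(\mathbf{1}_{2^{k-1}}\otimes C_r^{\T}u_2)$ by $-\check{Q}_{k-1}^{\T}u_1$ plus the telescoping $\gamma$-combinations of $Q_j^{\T}D_\alpha^{-1}v_1$ and the $[Q_{2^{k-1}}^{\T};\cdots;Q_{2^k-1}^{\T}]u_1$ term. A bookkeeping step then fuses the partial sums $0,Q_0,\ldots,Q_0+\cdots+Q_{2^{k-1}-2}$ coming from $v_1^{(k-1)}$ with the sums $Q_0+\cdots+Q_{2^{k-1}-1},\ldots,Q_0+\cdots+Q_{2^k-2}$ coming from \eqref{eq:nonnegativeT} to recover exactly the bottom $2^{k-1}$ block components of $v_1^{(k)}$. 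The computation for the $Z_k$-row is obtained verbatim by swapping $(Y,Q,T,D_\alpha,B_r,u_1,v_1,\alpha)\leftrightarrow(Z,V,S,A_\beta,C_r,u_2,v_2,\beta)$ and applying \eqref{eq:nonnegativeS}.

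Once \eqref{eq:triple-repre-k-pre} is in hand, \eqref{eq:triple-repre-k} drops out by pre-multiplying both sides by $\begin{bmatrix} I & Y_k \\ Z_k & I\end{bmatrix}$, which turns the left-hand-side matrix into the anti-block-diagonal $\begin{bmatrix} 0 & I-Y_k Z_k \\ I-Z_k Y_k & 0\end{bmatrix}$; reading off the two block rows gives exactly the two claimed identities. Nonnegativity of the right-hand sides follows since $Y_k,Z_k\ge 0$ (already established in Section~\ref{ssec:m-matrices}) and $v_1^{(k)},v_2^{(k)}\ge 0$ by construction, so the formulas give valid triplet representations of the nonsingular M-matrices $I-Y_k Z_k$ and $I-Z_k Y_k$ established in Theorem~\ref{thm:m-matrix}.

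The only real obstacle is the telescoping/index-matching step that verifies the bottom half of $v_1^{(k)}$ equals $v_1^{(k-1)}-\check{Q}_{k-1}^{\T}u_1$ plus the contributions from \eqref{eq:nonnegativeT}; the rest is mechanical block algebra. This is purely a careful-indexing exercise and presents no conceptual difficulty, but one has to align the running partial sums of the $Q_j$'s correctly, and likewise for the $V_j$'s in the dual identity.
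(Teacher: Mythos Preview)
Your proposal is correct and follows essentially the same route as the paper's proof: induction on $k$ with base case $k=1$ from \eqref{eq:triple-repre-1-pre}, the inductive step driven by the block structure \eqref{eq:YZ} together with the identities \eqref{eq:nonnegativeT}--\eqref{eq:nonnegativeS}, and the final pre-multiplication by $\begin{bmatrix} I & Y_k \\ Z_k & I\end{bmatrix}$ to obtain \eqref{eq:triple-repre-k}. The only cosmetic difference is that the paper packages the block reshuffling via an explicit permutation matrix $P_2$ (going from $k$ to $k+1$), whereas you compute $Y_k(\mathbf 1_{2^k}\otimes C_r^{\T}u_2)$ directly from \eqref{eq:YZ} (going from $k-1$ to $k$); the index-matching you flag as ``the only real obstacle'' is exactly the computation the paper carries out when it simplifies $v_1^{(k)}-\gamma T_k(\mathbf 1_{2^k}\otimes C_r^{\T}u_2)$ to the bottom half of $v_1^{(k+1)}$.
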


\begin{proof}
	We will prove by induction. 
	By \eqref{eq:triple-repre-1-pre} and  
	\eqref{eq:triple-repre-1}, the result is valid  for $k=1$. 
	Now assume that the result holds for $k\ge 2$, then 
	by defining 
	\begin{align*}
		P_2 = \begin{bmatrix}
			0 &0  &I & 0\\
			I & 0 & 0 & 0 \\
			0 & I & 0 & 0 \\
			0& 0& 0& I
		\end{bmatrix} \in \mathbb{R}^{2^{k+1}(p+q)\times 2^{k+1}(p+q)},
	\end{align*}
	it holds that 
	\begin{align*}
		\begin{bmatrix}
			-Y_{k+1} & I \\ I & -Z_{k+1}
		\end{bmatrix}
		=&
		P_2 \begin{bmatrix}
			-\gamma T_k & 0 & -Y_k & I
			\\
			0 & 0 & I & -Z_k 
			\\
			-Y_k & I & 0 & 0
			\\
			I & -Z_k & 0 & -\gamma S_k
		\end{bmatrix} P_2^{\T}
		\\
		=& 
		P_2\begin{bmatrix}
			0 & 0 & -Y_k & I 
			\\
			0 & 0 & I & -Z_k
			\\
			-Y_k & I & 0 & 0 
			\\
			I & -Z_k & 0 & 0
		\end{bmatrix}
		P_2^{\T}
		-P_2 \begin{bmatrix}
			I & 0 \\ 0 & 0 \\ 0 & 0 \\ 0 &  I
		\end{bmatrix}
		\begin{bmatrix}
			\gamma T_k  & 0 & 0 & 0 
			\\
			0 & 0 & 0 & \gamma S_k
		\end{bmatrix}
		P_2^{\T}.
	\end{align*}
	Furthermore, post-multiplying $P_2\begin{bmatrix}
		\boldsymbol{1}_{2^k} \otimes C_r^{\T}u_2
		\\
		\boldsymbol{1}_{2^k} \otimes B_r^{\T}u_1
		\\
		\boldsymbol{1}_{2^k} \otimes C_r^{\T}u_2
		\\
		\boldsymbol{1}_{2^k} \otimes B_r^{\T}u_1
	\end{bmatrix}$  yields  
	\begin{align*}
		&
		\begin{bmatrix}
			-Y_{k+1} & I \\ I & -Z_{k+1}
		\end{bmatrix}
		\begin{bmatrix}
			\boldsymbol{1}_{2^{k+1}} \otimes C_r^{\T} u_2 
			\\
			\boldsymbol{1}_{2^{k+1}} \otimes B_r^{\T} u_1 
		\end{bmatrix}
		\\
		=&
		P_2
		\begin{bmatrix}
			0 & 0 & -Y_k & I 
			\\
			0 & 0 & I & -Z_k 
			\\
			-Y_k & I & 0 & 0 
			\\
			I & -Z_k & 0 & 0
		\end{bmatrix}
		\begin{bmatrix}
			\boldsymbol{1}_{2^k} \otimes C_r^{\T} u_2 
			\\
			\boldsymbol{1}_{2^k} \otimes B_r^{\T} u_1
			\\
			\boldsymbol{1}_{2^k} \otimes C_r^{\T} u_2 
			\\
			\boldsymbol{1}_{2^k} \otimes B_r^{\T} u_1
		\end{bmatrix}
		- P_2 \begin{bmatrix}
			I & 0 \\ 0 & 0 \\ 0 & 0 \\ 0 & I
		\end{bmatrix}
		\begin{bmatrix}
			\gamma T_k & 0 & 0 & 0\\
			0 & 0 & 0& \gamma S_k
		\end{bmatrix}
		\begin{bmatrix}
			\boldsymbol{1}_{2^k} \otimes C_r^{\T} u_2 
			\\
			\boldsymbol{1}_{2^k} \otimes B_r^{\T} u_1
			\\
			\boldsymbol{1}_{2^k} \otimes C_r^{\T} u_2 
			\\
			\boldsymbol{1}_{2^k} \otimes B_r^{\T} u_1
		\end{bmatrix}
		\\
		=&
		P_2\begin{bmatrix}
			v_1^{(k)} - \gamma T_k (\boldsymbol{1}_{2^k} \otimes C_r^{\T} u_2)
			\\
			v_2^{(k)}
			\\
			v_1^{(k)}
			\\
			v_2^{(k)} - \gamma S_k (\boldsymbol{1}_{2^k} \otimes B_r^{\T} u_1)
		\end{bmatrix}
		\equiv  
		\begin{bmatrix}
			v_1^{(k)}
			\\
			v_1^{(k)} - \gamma T_k (\boldsymbol{1}_{2^k} \otimes C_r^{\T} u_2)
			\\
			v_2^{(k)}
			\\
			v_2^{(k)} - \gamma S_k (\boldsymbol{1}_{2^k} \otimes B_r^{\T} u_1)
		\end{bmatrix}. 
	\end{align*}
	Besides, it follows from the definition of $v_1^{(k)}$ and \eqref{eq:nonnegativeT} that 
	\begin{align*}
		&
		v_1^{(k)} - \gamma T_k (\boldsymbol{1}_{2^k} \otimes C_r^{\T} u_2)
		\\
		=& 
		\begin{multlined}[t]
			\alpha (\boldsymbol{1}_{2^k} \otimes Q_0^{\T} v_1) + 
			\begin{bmatrix}
				Q_{2^k}^{\T} 
				\\ 
				Q_{2^k+1}^{\T} 
				\\
				\vdots 
				\\
				Q_{2^{k+1}-1}^{\T}  
			\end{bmatrix} u_1
			+ \gamma 
			\begin{bmatrix}
				Q_0^{\T} + Q_1^{\T}+\cdots + Q_{2^k-1}^{\T}  
				\\
				Q_0^{\T} + Q_1^{\T} + \cdots + Q_{2^k}^{\T}   
				\\
				\vdots 
				\\
				Q_0^{\T}+Q_1^{\T}+\cdots + Q_{2^{k+1}-2}^{\T}
			\end{bmatrix} D_\alpha^{-1}v_1
		\end{multlined},	
	\end{align*}
	indicating that 
	\begin{align*}
		&
		\begin{bmatrix}
			v_1^{(k)} 
			\\ 
			v_1^{(k)} - \gamma T_k (\boldsymbol{1}_{2^k} \otimes C_r^{\T} u_2)
		\end{bmatrix}
		\\
		=&
		\begin{multlined}[t]
			\begin{bmatrix}
				\alpha (\boldsymbol{1}_{2^k} \otimes Q_0^{\T} v_1)
				\\
				\alpha (\boldsymbol{1}_{2^k} \otimes Q_0^{\T} v_1)
			\end{bmatrix}
			+
				[Q_0, Q_1, \cdots, Q_{2^k-1}, Q_{2^k}, Q_{2^k+1}, \cdots, Q_{2^{k+1}-1} ]^{\T} u_1
			\\
			+ \gamma  
				[0 , Q_0  , \cdots , Q_0+Q_1+\dots + Q_{2^k-1}
				, \cdots , 
				Q_0+Q_1+\dots + Q_{2^{k+1}-2}]^{\T} D_\alpha^{-1}v_1
		\end{multlined}
		\\
		\equiv & v_1^{(k+1)}.
	\end{align*}
	Analogously, with the definition of $v_2^{(k)}$ and \eqref{eq:nonnegativeS}, we have 
	\begin{align*}
		\begin{bmatrix}
			v_2^{(k)} 
			\\ 
			v_2^{(k)} - \gamma S_k (\boldsymbol{1}_{2^k} \otimes B_r^{\T} u_1)
		\end{bmatrix} \equiv v_2^{(k+1)}.
	\end{align*}
	Consequently, we obtain  
	\begin{align*}
		\begin{bmatrix}
			-Y_{k+1} & I \\ I & -Z_{k+1}
		\end{bmatrix}
		\begin{bmatrix}
			\boldsymbol{1}_{2^{k+1}} \otimes C_r^{\T} u_2 
			\\
			\boldsymbol{1}_{2^{k+1}} \otimes B_r^{\T} u_1 
		\end{bmatrix}
		= 
		\begin{bmatrix}
			v_1^{(k+1)} \\ v_2^{(k+1)}
		\end{bmatrix}.
	\end{align*}
The proof for \eqref{eq:triple-repre-k-pre} by induction is complete.  
Pre-multiplying $\begin{bmatrix}
	I & Y_k \\ Z_k & I
\end{bmatrix}$ to  \eqref{eq:triple-repre-k-pre}  leads to  \eqref{eq:triple-repre-k}. 
\end{proof}

Obviously, $v_1^{(k)}\ge 0$ and $v_2^{(k)}\ge 0$. So~\eqref{eq:triple-repre-k} 
gives the triplet representations of the nonsingular M-matrices $I-Y_k Z_k$ and 
$I-Z_k Y_k$, which are respectively  
\begin{align}
	&
	\left(
	N_{I-Y_k Z_k}, \boldsymbol{1}_{2^k} \otimes B_r^{\T}u_1, v_1^{(k)}+Y_kv_2^{(k)}
	\right), \quad 
	\left(
	N_{I-Z_k Y_k}, \boldsymbol{1}_{2^k} \otimes C_r^{\T}u_2, v_2^{(k)}+Z_kv_1^{(k)}
	\right).\label{eq:tri-rep-ZYk} 
\end{align}

\subsection{Algorithm for dADDA}\label{ssec:algorithm}

With the triplet representations of $D_\alpha$, $A_\beta$ and $I-Y_k Z_k$, 
 we can  compute $H_k$ in highly accurate using 
the GTH-like algorithm. Our proposed algorithm is summarized as  
Algorithm~\ref{alg:decoupled-highly-accurate-adda}. 

\begin{algorithm}[h]
	\caption{dADDA}\label{alg:decoupled-highly-accurate-adda}
	\hspace*{0.02in}{\bf Input:}
	coefficients $A, D, B_l, B_r, C_l, C_r$ and vectors $u_1, u_2, v_1, v_2$. 
	\\
	\hspace*{0.02in}{\bf Output:}
	the minimal nonnegative solution $X$. 
	\begin{algorithmic}[1]
		\State choose $\alpha$, $\beta$ with $0\le \alpha \le \min_i a_{ii}^{-1}$,  $0 \le \beta\le \min_j d_{jj}^{-1}$, $\max\{\alpha, \beta\}> 0$; 
		\State construct the triplet representations of $D_\alpha$ and $A_\beta$ by \eqref{eq:triple-repreAD}; 
		\State compute $U_0, V_0, Q_0$ and $W_0$ by the GTH-like algorithm, using the triplet representations of $D_\alpha$ and $A_\beta$  in \eqref{eq:triple-repreAD1};    
		\State compute  $Y_0=\alpha Q_0^{\T}C_l$ and $Z_0=\beta C_r^{\T} U_0$;
		\State compute $H_0$ by the GTH-like algorithm, using the triplet representation in \eqref{eq:triple-initial-3}; 
		\State set $k=0$;
		\Repeat 
		\State compute $T_k$ and $S_k$;
		\State set k=k+1;
		\State compute $Y_k$ and $Z_k$ by~\eqref{eq:YZ};
		\State compute $U_j, V_j, Q_j$ and $W_j$ by the GTH-like algorithm for $j=2^{k-1}, \cdots, 2^k-1$; 
		\State compute $v_1^{(k)}$ and $v_2^{(k)}$ by Theorem~\ref{thm:triple-repre};
		\State compute $H_k$ by the GTH-like algorithm, using triplet representation  in \eqref{eq:tri-rep-ZYk};  
		\Until{convergence}
		\\
		\Return the last $H_k$ as the approximation to $X$. 
	\end{algorithmic}
\end{algorithm}

For the implementation of Algorithm~\ref{alg:decoupled-highly-accurate-adda}, 
we need to choose the 
stop criteria for convergence. With $\varepsilon$ being a  preselected tolerance, 
 we may adopt one of the following criteria: 
\begin{enumerate}
	\item[(1)] The normalized residual in norm: 
\[
	\frac{\|H_k C H_k - H_k D -AH_k+B\|}
{\|H_k C H_k\|+\|H_k D\|+\|A H_k\| + \|B\|}\le \varepsilon,
\]
where $\|\cdot\|$ is some matrix norm and for convenience one can use the Frobenius norm or 
the $l_1$ norm.

\item[(2)] The relative change: 
	\[
		|H_{k+1}-H_k| \le \varepsilon H_{k+1} ,
\]
which is simple and cheap to use.

\item[(3)] The entrywise relative residual: 
\[
	\mathrm{ERres}_k:=\max_{i,j}
	\frac{|(H_kCH_k +  N_{A} H_k + H_kN_D + B) - (\diag(A) H_k + H_k \diag(D))|_{(i,j)}}
	{[\diag(A) H_k + H_k \diag(D)]_{(i,j)}}\le \varepsilon,
\]
which is 
the entrywise relative 
accuracy of $H_k$ as an approximation to $X$.  

\item[(4)] The entrywise relative error: 
\[
	\mathrm{ERerr}_k:= \max_{i,j} \frac{|(H_k-X)_{(i,j)}|}{X_{(i,j)}} \le \varepsilon,
\]
which is not generally available because $X$ is unknown. 
\end{enumerate}

As we are interested in the accuracy of the entries in $X$, thus 
for Algorithm~\ref{alg:decoupled-highly-accurate-adda} we recommend   
the entrywise relative residual ERres$_k$ in the convergence control. 
In \cite{wangWL2012alternatingdirectional, xueXL2012accurate},  
the Kahan's stopping criteria was recommended for terminating iterations, which may  
lead premature termination without improvements in the approximate solution; please refer 
to \cite{xueXL2012accurateSylvester} for details.

\begin{remark}\label{rk:complexitydADDA}
	The  dominant computational cost for  Algorithm~\ref{alg:decoupled-highly-accurate-adda} involves $U_j, V_j, Q_j, W_j$ and $H_k$, which are obtained by solving M-matrix linear systems. More concretely,  it follows from 
	\begin{align*}
		&U_0 = A_\beta^{-1}B_l,  \quad V_0 = A_\beta^{-\T}C_r, \quad  
		W_0 = D_\alpha^{-1} C_l, \quad Q_0 =D_\alpha^{-\T} B_r,
		\\
		&U_j=A_{\alpha,\beta} U_{j-1} = A_{-\alpha}A_{\beta}^{-1}U_{j-1},
		\quad
		V_j=A_{\alpha,\beta}^{\T} V_{j-1}=A_{-\alpha}^{\T}A_{\beta}^{-\T}V_{j-1},
		\\
		&W_j=D_{\alpha,\beta} W_{j-1}=D_{-\beta}D_{\alpha}^{-1}W_{j-1},
		\quad
		Q_j= D_{\alpha,\beta}^{\T} Q_{j-1}=D_{-\beta}^{\T}D_{\alpha}^{-\T}Q_{j-1}    
	\end{align*}
	that it requires $\bigO(m^2(p+q) + n^2(p+q))$ flops as long as the LU factorizations of the M-matrices $A_{\beta}$ and $D_{\alpha}$ are known.  For $H_k$ we just need to solve the small M-matrix linear systems with the coefficient being $I-Y_k Z_k\in \mathbb{R}^{2^k p \times 2^k p}$, which involves $\bigO((2^kp)^3)$ flops . Accordingly, in each iteration the computational complexity  is $\bigO(m^2+n^2)$ and the LU factorizations of  $A_{\beta}$ and $D_{\alpha}$, obtained by performing Algorithm~\ref{alg:GTH-like}, 	dominate the whole  computational cost, which is $\bigO(m^3+n^3)$.

However,  by Algorithm~\ref{alg:GTH-like},  when the original $A$  and $D$ are structurally  sparse or low-rank updates of some diagonal matrices, the complexities for calculating these LU factorizations can be significantly reduced. More specifically, when $A$ and $D$ are banded or   low-rank updates of some diagonal matrices, the complexities  for the  LU factorizations of $A_{\beta}$ and $D_{\alpha}$, respectively,  are $\bigO(m)$ and $\bigO(n)$, and that  for computing $U_j$, $V_j$, $W_j$ and $Q_j$ is $\bigO(m)+\bigO(n)$.  As a result, the total complexity for Algorithm~\ref{alg:decoupled-highly-accurate-adda} is reduced to $\bigO(m+n)$.

	It is worthwhile to point that  in the accADDA proposed by Xue and Li~\cite{xueL2017highly}, one cannot take  advantage of special structures  in  $A$ and $D$, such as sparsity,  because the doubling iterates~\eqref{eq:sda} destroy these structures without modifications. In each iteration the accADDA requires solving two M-matrix linear systems whose sizes are $m\times m$ and $n\times n$, and  the complexity is $\bigO(m^3+n^3)$ per iteration.
\end{remark}

\begin{remark}\label{rk:algorithmYk}
One may also be interested in the dual solution $Y$ to the dual  
 MARE~\eqref{eq:dual_nare}. In that case it is necessary  
to compute  the triplet representation of the nonsingular M-matrix 
$I-Z_kY_k$ in~\eqref{eq:tri-rep-ZYk}. 
We can then  compute $G_k$ in high accuracy, 
adapting the GTH-like algorithm. 
\end{remark}

\section{Numerical Examples}\label{sec:numerical-examples}

To illustrate the performance of the  dADDA, we apply it to two  
test sets. One comes from stochastic fluid flows~\cite{beanOT2005algorithm,xueL2017highly} with $10$ examples. 
The  other originates from the transport theory, 
 the  one-group neutron transport equation~\cite{juangL1998nonsymmetric}. 
 For comparison we  also apply  the accADDA~\cite{xueL2017highly} to both test sets. For all examples $A$ and $D$ are rank-one updates of diagonal matrices and by Remark~\ref{rk:GTH-for-lowrank} one can solve the linear equations related to $A_{\beta}$ and $D_{\alpha}$ with $\bigO(n+m)$ flops.  We replace the GTH-like algorithm  in the  third and eleventh lines in Algorithm~\ref{alg:decoupled-highly-accurate-adda}   with the technique given in Remark~\ref{rk:GTH-for-lowrank} to  compute $U_j$, $V_j$, $W_j$ and $Q_j$, and denote the refined method as dADDA$\rm{_{opt}}$.       
 All three algorithms are implemented in MATLAB 2019b on a 64-bit PC with an Intel Core i7 processor at 3.20 GHz and 64G RAM. 

\begin{example}[Stochastic fluid flow]\label{eg:markov-chain}
	In this example, we have 
	\begin{align*}
		A = n I_{m}, \quad D = (10^4 n + m) I_n - 10^4 \boldsymbol{1}_{n\times n}, \quad B_l = \boldsymbol{1}_{m}, \quad B_r=\boldsymbol{1}_{n}, \quad C_l=\boldsymbol{1}_{n}, \quad C_r=\boldsymbol{1}_{m},
	\end{align*}
	which satisfies $W \boldsymbol{1}_{n+m} = 0$, indicating $u_1=\boldsymbol{1}_{n}$, $u_2=\boldsymbol{1}_{m}$, $v_1=0$ and $v_2=0$. 
	The minimal nonnegative solution  is $X = \frac{1}{n}\boldsymbol{1}_{m\times n}$. When taking $m=2, n=18$, it is exactly the example of a positive recurrent Markov chain  displayed in~{\cite[Example~6.1]{xueL2017highly}}.  
	We set the tolerance for the entrywise relative residual ERres$_k$ as $10^{-14}$ and the maximal number of iterations as $20$.  

	Table~\ref{table-Markov} shows the numerical results produced by accADDA, dADDA and dADDA$\rm{_{opt}}$. 
	Besides ERres$_k$ and ERerr$_k$, we also present  rank$(H_k)$, $\|H_k\|_F$, the   numbers of iterations (\#it)  required  and  the respective execution times (eTime).  
From Table~\ref{table-Markov}, with the same iterations, all three algorithms produce  comparable results on   ERres$_k$,  
ERerr$_k$,  $\rank(H_k)$ and $\|H_k\|_F$.  However, for  examples of medium and large sizes the dADDA requires much less    
execution time  than that of accADDA;   
and the modified method dADDA$\rm{_{opt}}$ takes the least time.  

\begin{table}[t]
	\footnotesize
	\centering
	\begin{tabular}{c|c|c|c|c|c|c}
		\hline 
		\multicolumn{7}{c}{$m=18,n=2$}  \\
		\hline
		&ERres$_k$ & ERerr$_k$& $\rank(H_k)$& $\|H_k\|_F$ & \#it  & eTime    \\
		& ($\times10^{-16}$) & ($\times10^{-12}$) &      & ($\times10^{-1}$) &  & (s)     \\
		\hline 
		accADDA &$5.7773$ &  $5.8178$ & $1$ &  $3.3333$ & $4$ & $3.7296\times 10^{-3}$\\
		dADDA &$3.8515$ & $5.8175$ & $1$& $3.3333$ & $4$& $5.6041\times 10^{-3}$ \\
		dADDA$\rm{_{opt}}$ & $3.8515$& $5.8181$&$1$&  $3.3333$ & $4$ & $ 2.3085\times 10^{-3}$ \\
		\hline 
		\hline
		\multicolumn{7}{c}{$m=90,n=10$}  \\
		\hline
		&ERres$_k$ & ERerr$_k$& $\rank(H_k)$& $\|H_k\|_F$ & \#it  & eTime    \\
		& ($\times10^{-15}$) & ($\times10^{-12}$) &      & ($\times10^{-1}$) &  & (s)     \\
		\hline 
		accADDA & $1.8392$ & $5.8188$ & $1$ & $3.3333$ & $4$ & $4.3765\times 10^{-2}$ \\
		dADDA &   $2.5749$ & $5.8186$ &  $1$  &  $3.3333$ &  $4$ &  $3.0089\times 10^{-2}$ \\
	dADDA$\rm{_{opt}}$ & $0.7357$ & $5.8186$ & $1$ &  $3.3333$  & $4$ &  $2.9446\times 10^{-3}$ \\
		\hline 
		\hline
		\multicolumn{7}{c}{$m=180,n=20$}  \\
		\hline
		&ERres$_k$ & ERerr$_k$& $\rank(H_k)$& $\|H_k\|_F$ & \#it  & eTime    \\
		& ($\times10^{-15}$) & ($\times 10^{-12}$) &      & ($\times10^{-1}$) &  & (s)     \\
		\hline 
		accADDA & $0.7316$ &  $5.8203$ & $1$ & $3.3333$ & $4$ & $1.4570\times 10^{-1}$\\
		dADDA & $5.1211$ &$5.8185$ & $1$ & $3.3333$& $4$ & $5.3418\times 10^{-2}$ \\
		dADDA$\rm{_{opt}}$ &  $1.2803$ & $5.8183$ &$1$ & $3.3333$  &$4$ & $1.9806\times 10^{-3}$ \\
		\hline 
		\hline
		\multicolumn{7}{c}{$m=540,n=60$}  \\
		\hline
		&ERres$_k$ & ERerr$_k$& $\rank(H_k)$& $\|H_k\|_F$ & \#it  & eTime    \\
		& ($\times10^{-15}$) & ($\times10^{-12}$) &      & ($\times10^{-1}$) &  & (s)     \\
		\hline 
		accADDA & $3.6443$  & $5.8197$ &  $1$ &  $3.3333$ & $4$ & $5.5278\times 10^{0}$\\ 
		dADDA &  $2.7332$ &  $5.8183$ & $1$  & $3.3333$  & $4$ & $6.9871\times 10^{-1}$ \\
		dADDA$\rm{_{opt}}$ & $2.1866$ & $5.8173$ & $1$ &  $3.3333$  & $4$  & $1.6774\times 10^{-2}$ \\
		\hline 
		\hline
		\multicolumn{7}{c}{$m=900,n=100$}  \\
		\hline
		&ERres$_k$ & ERerr$_k$& $\rank(H_k)$& $\|H_k\|_F$ & \#it  & eTime    \\
		& ($\times10^{-15}$) & ($\times10^{-12}$) &      & ($\times10^{-1}$) &  & (s)     \\
		\hline 
		accADDA & $1.2746$ & $5.8153$  &  $1$ & $3.3333$ & $4$ & $2.9089\times 10^{1}$\\
		dADDA &  $2.9133$ &  $5.8180$ & $1$ &  $3.3333$  & $4$ & $3.1702\times 10^{0}$ \\
		dADDA$\rm{_{opt}}$ & $2.0029$  & $5.8174$  &  $1$ &  $3.3333$ & $4$ & $1.8585\times 10^{-2}$ \\
		\hline 
		\hline
		\multicolumn{7}{c}{$m=1800,n=200$}  \\
		\hline
		&ERres$_k$ & ERerr$_k$& $\rank(H_k)$& $\|H_k\|_F$ & \#it  & eTime    \\
		& ($\times10^{-15}$) & ($\times10^{-12}$) &      & ($\times10^{-1}$) &  & (s)     \\
		\hline 
		accADDA &  $3.8216$ & $5.8192$&   $1$&  $3.3333$ &  $4$  &$2.1508\times 10^{2}$\\
		dADDA & $3.2756$ & $5.8203$ &   $1$ &   $3.3333$ & $4$ & $2.2739\times 10^{1} $ \\
		dADDA$\rm{_{opt}}$ & $1.8198$  & $5.8168$ &  $1$ &   $3.3333$ & $4$ &  $1.1996\times 10^{-1}$ \\
		\hline 
		\hline
		\multicolumn{7}{c}{$m=3600,n=400$}  \\
		\hline
		&ERres$_k$ & ERerr$_k$& $\rank(H_k)$& $\|H_k\|_F$ & \#it  & eTime    \\
		& ($\times10^{-15}$) & ($\times10^{-12}$) &      & ($\times10^{-1}$) &  & (s)     \\
		\hline 
		accADDA &  $3.2747$ &  $5.8186$ &  $1$ &  $3.3333$ &   $4$ & $1.8254\times 10^{3}$\\
		dADDA &  $3.8205$ &  $5.8153$ &$1$ & $3.3333$ & $4$  & $1.8411\times 10^{2}$\\
		dADDA$\rm{_{opt}}$ & $4.7302$ & $5.8244$  & $1$ &   $3.3333$  &  $4$  & $5.1599\times 10^{-1}$ \\
		\hline 
		\hline
		\multicolumn{7}{c}{$m=7200,n=800$}  \\
		\hline
		&ERres$_k$ & ERerr$_k$& $\rank(H_k)$& $\|H_k\|_F$ & \#it  & eTime    \\
		& ($\times10^{-15}$) & ($\times10^{-12}$) &      & ($\times10^{-1}$) &  & (s)     \\
		\hline 
		accADDA &  $4.7295$ &$5.8434$ &  $1$ &  $3.3333$ & $4$ &   $1.4119\times 10^{4}$ \\
		dADDA & $4.7295$ &  $5.8182$ &  $1$ &  $3.3333$ &  $4$ &  $1.4107\times 10^{3}$ \\
		dADDA$\rm{_{opt}}$ & $1.2733$ & $5.8120$ &   $1$  &$3.3333$ &  $4$ & $2.9070 \times 10^{0}$\\
		\hline 
		\hline
		\multicolumn{7}{c}{$m=10800,n=1200$}  \\
		\hline
		&ERres$_k$ & ERerr$_k$& $\rank(H_k)$& $\|H_k\|_F$ & \#it  & eTime    \\
		& ($\times10^{-15}$) & ($\times10^{-12}$) &      & ($\times10^{-1}$) &  & (s)     \\
		\hline 
		accADDA &   $2.7284$ & $5.7754$  &  $1$ & $3.3333$ & $4$   & $4.8283\times 10^{4}$\\
		dADDA &  $8.0034$ & $5.8238$ &  $1$  & $3.3333$  &  $4$ &  $4.9531\times 10^{3}$\\
		dADDA$\rm{_{opt}}$ &  $3.6379$ & $5.8172$ &  $1$   &  $3.3333$ & $4$ &$1.0210\times10^{1}$\\
		\hline 
		\hline
		\multicolumn{7}{c}{$m=13500,n=1500$}  \\
		\hline
		&ERres$_k$ & ERerr$_k$& $\rank(H_k)$& $\|H_k\|_F$ & \#it  & eTime    \\
		& ($\times10^{-15}$) & ($\times10^{-12}$) &      & ($\times10^{-1}$) &  & (s)     \\
		\hline 
		accADDA &  $3.6378$ & $5.8552$ &  $1$  &  $3.3333$ &   $4$ & $9.4267\times 10^{4}$\\
		dADDA &  $8.0033$ &  $5.8251$ &  $1$ &  $3.3333$ &  $4$ & $9.6697\times 10^{3}$\\
		dADDA$\rm{_{opt}}$ & $0.1819$  &  $5.8166$ &  $1$ & $3.3333$ & $4$ &  $1.9306\times 10^{1}$\\
		\hline 
	\end{tabular}
	\caption{Numerical results for Example~\ref{eg:markov-chain}}
	\label{table-Markov}
\end{table}
\end{example}

\begin{example}[Transport Theory]\label{eg:transport-theory}
	When using the Gauss-Legendre to discretize  the integrodifferential equation satisfied by the scattering function \cite{juangL1998nonsymmetric}, it leads to the MAREs with 
	\begin{align*}
		&
		A = \frac{1}{\beta(1+\alpha)}\diag(\omega_1^{-1}, \cdots, \omega_n^{-1}) - \boldsymbol{1}_n q^{\T}, \quad 
		D =  \frac{1}{\beta(1-\alpha)}\diag(\omega_1^{-1}, \cdots, \omega_n^{-1}) - q\boldsymbol{1}_n^{\T}, \quad 
		\\
		& 
		B_l = B_r = \boldsymbol{1}_n, \quad C_l = C_r = q, 
	\end{align*}
	where $\omega_1, \cdots, \omega_n$ are the Gauss-Legendre notes satisfying $0<\omega_n<\omega_{n-1}<\cdots<\omega_1<1$,  $q = \frac{1}{2} \diag(\omega_1^{-1}, \cdots, \omega_n^{-1}) c$,  and $c$ is the weights vector  with $\sum_{i=1}^n c_i =1$ and $c_i>0$. 

	In this test set, we will randomly generate $\alpha$, $\beta$, $\omega_i$ and $c_i$ as follows: $\alpha$, $\beta$ and $\omega_i$ follow the uniform distribution in the interval $(0,1)$, and we obtain $c$ by the command   \verb|randn| in MATLAB   before the normalization   $\sum_{i=1}^n c_i =1$. We have $W=\begin{bmatrix}
		D & -C \\ -B & A
		\end{bmatrix}$ being a rank-one update of a nonsingular diagonal matrix. Its inverse   can be computed cheaply with the help of the SMWF. 
		For the triplet representation of $W$,  we firstly  generate $v_1\ge 0$ and $v_2\ge 0$ with the command  \verb|rand|, and then we compute $\begin{bmatrix}
		u_1\\u_1
		\end{bmatrix} = W^{-1}\begin{bmatrix}
		v_1 \\ v_2
	\end{bmatrix}$, which obviously holds $u_1>0$ and $u_2>0$. 
	Again, we  take   REres$_k$  as the stop criterion,  
	with $\varepsilon$ setting as $10^{-13}$,  
	and  the maximal number of iterations as $100$. 
Table~\ref{table-Trans} displays  the numerical results produced by the  three algorithms  for ten examples.  
	Within $100$ iterations, the accADDA does not converge, while the dADDA and the dADDA$\rm{_{opt}}$ produce satisfactory results within  $10$ iterations. The dADDA$\rm{_{opt}}$ take much less time than dADDA, 
taking  advantage of the structures in $A$ and $D$.

	\begin{table}[ht!]
		\footnotesize
		\centering
		\begin{tabular}{c|c|c|c|c|c}
			\hline 
			\multicolumn{6}{c}{$n=10$, $\alpha=9.7059\times 10^{-1}$, $\beta=1.5761\times 10^{-1}$}  \\
			\hline
			&ERres$_k$ & $\rank(H_k)$& $\|H_k\|_F$ & \#it  & eTime(s)   \\
			\hline 
			accADDA & $2.9147\times10^{-6}$ &$7$ & $3.1800\times10^{-2}$&$100$&$4.6493\times10^{-2}$  \\
			dADDA & $1.3189\times10^{-15}$  & $6$ & $3.1800\times10^{-2}$&$4$ & $4.5794\times10^{-3}$\\
			dADDA$\rm{_{opt}}$ & $1.5393\times 10^{-15}$ & $6$ & $3.1800\times 10^{-2}$ & $4$ & $2.6813\times 10^{-3}$ \\
			\hline 
			\hline
			\multicolumn{6}{c}{$n=20$, $\alpha=2.5428 \times 10^{-1}$, $\beta= 8.4072\times 10^{-1}$}  \\
			\hline
			&ERres$_k$ & $\rank(H_k)$& $\|H_k\|_F$ & \#it  & eTime(s)    \\
			\hline 
			accADDA &  $1.6249\times 10^{-2}$ & $14$ & $7.4264$ & $100$  & $9.8583\times 10^{-2}$\\ 
			dADDA & $2.3140\times 10^{-15}$ &  $12$ &   $7.5668$ & $7$ & $7.2822\times 10^{-2}$\\
			dADDA$\rm{_{opt}}$ &  $1.3061\times 10^{-15}$ &  $12$  &  $7.5668$ &  $7$  & $1.4355\times 10^{-2}$  \\
			\hline 
			\hline
			\multicolumn{6}{c}{$n=40$, $\alpha= 5.4972\times 10^{-1}$, $\beta=5.8526\times 10^{-1}$}  \\
			\hline
			&ERres$_k$ & $\rank(H_k)$& $\|H_k\|_F$ & \#it  & eTime(s)    \\
			\hline 
			accADDA & $9.1830\times10^{-3}$& $14$ & $5.6640$ & $100$ & $2.4501\times10^{-1}$ \\
			dADDA & $1.8599\times10^{-15}$ & $11$ & $5.7022$ &$6$ & $6.5154\times 10^{-2}$  \\
			dADDA$\rm{_{opt}}$ &$1.5379\times 10^{-15}$ & $11$  & $5.7022$ & $6$ & $6.2749\times 10^{-3}$\\
			\hline 
			\hline
			\multicolumn{6}{c}{$n=50$, $\alpha=1.8896\times 10^{-1}$, $\beta=7.4469\times 10^{-1}$}  \\
			\hline
			&ERres$_k$ & $\rank(H_k)$& $\|H_k\|_F$ & \#it  & eTime(s)    \\
			\hline 
			accADDA & $4.0316\times 10^{-2}$ &  $21$ &  $1.3570\times 10^{1}$  &  $100$ &  $4.2209\times 10^{-1}$\\
			dADDA & $1.5520\times 10^{-14}$ & $16$ &  $1.4245\times 10^{1}$ &   $10$ &  $1.0664\times 10^{1}$ \\
			dADDA$\rm{_{opt}}$ & $6.0738\times 10^{-15}$ & $16$ & $1.4245\times 10^{1}$  &  $10$  &$3.8279\times 10^{0}$ \\
			\hline 
			\hline
			\multicolumn{6}{c}{$n=75$, $\alpha=3.0541\times 10^{-2}$, $\beta=1.9781\times 10^{-1}$}  \\
			\hline
			&ERres$_k$ & $\rank(H_k)$& $\|H_k\|_F$ & \#it  & eTime(s)    \\
			\hline 
			accADDA & $5.6262\times10^{-2}$ & $19$ & $3.8084$ & $100$ & $8.6167\times10^{-1}$ \\
			dADDA & $9.8718\times10^{-15}$ & $14$ & $3.9519$ & $9$ & $4.7539\times 10^{0}$  \\
			dADDA$\rm{_{opt}}$ &  $3.8516\times 10^{-15}$ & $14$ & $3.9519$ & $9$ & $4.9843\times 10^{-1}$	\\
			\hline 
			\hline
			\multicolumn{6}{c}{$n=100$, $\alpha=6.7612\times 10^{-1}$, $\beta=2.4071\times 10^{-1}$}  \\
			\hline
			&ERres$_k$ & $\rank(H_k)$& $\|H_k\|_F$ & \#it  & eTime(s)    \\
			\hline 
			accADDA & $3.2810\times 10^{-2}$ & $19$ & $3.4759$ & $100$  & $1.4790\times 10^{0}$ \\
			dADDA & $1.2228\times 10^{-14}$  &  $14$ &  $3.5386$ &$8$ & $1.9340\times 10^{0}$ \\
			dADDA$\rm{_{opt}}$ & $3.4579\times 10^{-15}$ & $14$ & $3.5386$ & $8$ & $9.3479\times 10^{-2}$ \\
			\hline 
			\hline
			\multicolumn{6}{c}{$n=200$, $\alpha=2.5104\times 10^{-1}$, $\beta=2.1756\times 10^{-1}$}  \\
			\hline
			&ERres$_k$ & $\rank(H_k)$& $\|H_k\|_F$ & \#it  & eTime(s)    \\
			\hline 
			accADDA &   $6.7672\times10^{-2}$ & $24$ & $9.3653$ & $100$ & $8.7763\times 10^{0}$  \\
			dADDA &  $8.7390\times10^{-14}$ & $18$ &$9.8271$ & $10$ & $8.3929\times10^{1}$ \\
			dADDA$\rm{_{opt}}$ &$7.8890\times 10^{-14}$ & $18$ & $9.8271$ & $10$ & $4.4624\times 10^{0}$\\
			\hline 
			\hline
			\multicolumn{6}{c}{$n=400$, $\alpha=9.8284\times 10^{-1}$, $\beta=4.0239\times 10^{-1}$}  \\
			\hline
			&ERres$_k$ & $\rank(H_k)$& $\|H_k\|_F$ & \#it  & eTime(s)    \\
			\hline 
			accADDA & $2.6961\times10^{-2}$ & $14$ &$1.5130$ & $100$ & $7.9537\times10^{1}$   \\
			dADDA & $7.5368\times10^{-14}$ & $10$ & $1.5155$ & $10$ & $4.6554\times10^{2}$ \\
			dADDA$\rm{_{opt}}$ &  $8.4919\times 10^{-15}$ &  $10$ & $1.5155$ &  $10$ & $8.4121\times 10^{0}$\\
			\hline 
			\hline
			\multicolumn{6}{c}{$n=600$, $\alpha=9.5635\times 10^{-1}$, $\beta=6.4509\times 10^{-1}$}  \\
			\hline
			&ERres$_k$ & $\rank(H_k)$& $\|H_k\|_F$ & \#it  & eTime(s)    \\
			\hline 
			accADDA & $7.6493\times10^{-3}$ & $14$ & $9.9458$& $100$& $3.4362\times10^2$ \\
			dADDA & $3.3671\times10^{-14}$ & $12$ & $9.9643$ & $7$ & $1.3553\times10^1$ \\
			dADDA$\rm{_{opt}}$ &$2.0565\times 10^{-14}$ & $12$ &  $9.9643$ & $7$ &  $1.8830\times 10^{-1}$			\\
			\hline 
			\hline
			\multicolumn{6}{c}{$n=800$, $\alpha=9.6990\times 10^{-1}$, $\beta=7.1803\times 10^{-1}$}  \\
			\hline
			&ERres$_k$ & $\rank(H_k)$& $\|H_k\|_F$ & \#it  & eTime(s)    \\
			\hline 
			accADDA & $1.2776\times10^{-2}$ &$15$ & $1.0030\times10^1$ &$100$ & $8.5507\times10^2$   \\
			dADDA & $5.4891\times10^{-14}$ & $11$ & $1.0049\times10^1$& $9$& $4.3877\times10^2$ \\
			dADDA$\rm{_{opt}}$ &  $6.2871\times 10^{-15}$ &  $11$ & $1.0049\times 10^{1}$ &  $9$ & $3.0791\times 10^{0}$ 		\\
			\hline 
		\end{tabular}
		\caption{Numerical results for Example~\ref{eg:transport-theory}}
		\label{table-Trans}
	\end{table}

	For those test examples we observe that different $\alpha$ and $\beta$ would 
	make an impact on the number of iterations  required by dADDA (also dADDA$\rm{_{opt}}$). Figures~\ref{fig10}--\ref{fig100} 
	illustrates this influence, where $\alpha$ and $\beta$   take $200$ different values. 
	In all six figures,  the effects for  
	different $\alpha$ are displayed on the left while that for $\beta$ are on the right.  
	Besides the number of  iterations, we also plot the numerical results for ERres$_k$. 
	It shows in all left figures that  there are several ``critical" points in $\alpha$, 
	at which the number of iterations and ERres$_k$ jump  abruptly; 
	and for all right figures  it seems that one ``critical" point exists for $\beta$, 
	which grows as the size $n$ increases.

\begin{figure*}[ht]
		\centering
		\subfloat[$\beta =0.65574$]{
			\includegraphics[width=2.8in]{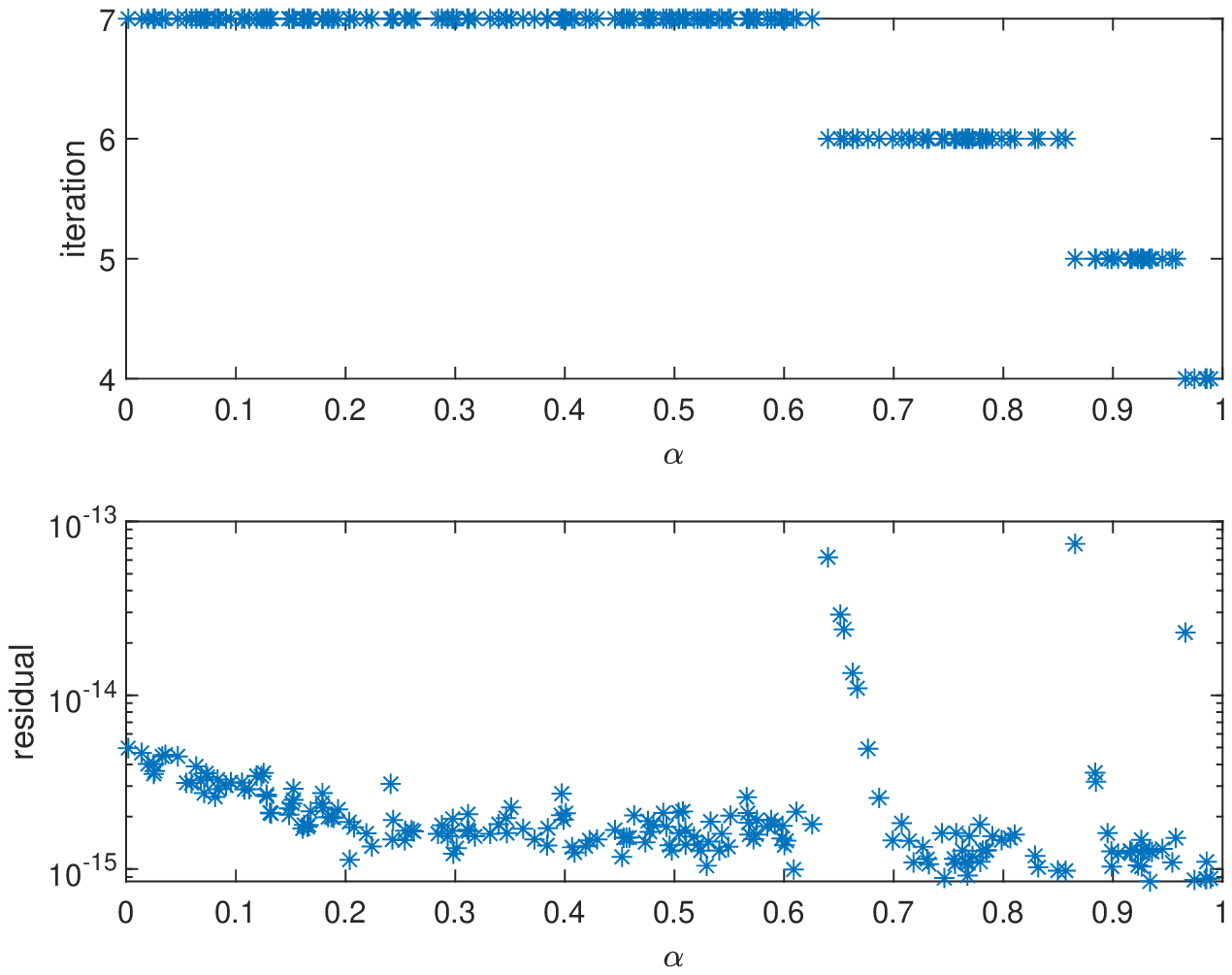}
		}
		\subfloat[$\alpha=0.4608$]{
			\includegraphics[width=2.8in]{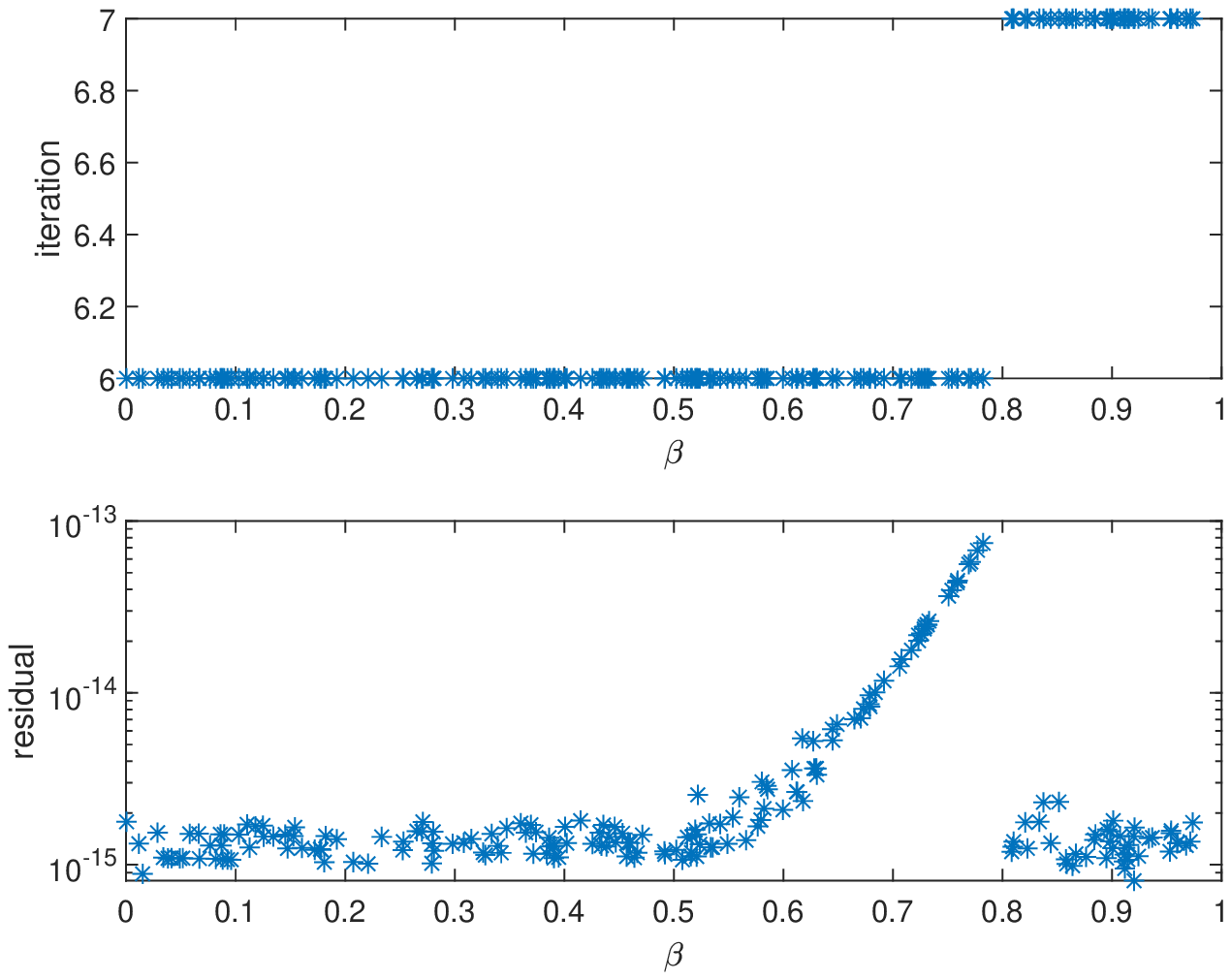}
		}
	\caption{Iterations and ERres  for $n=10$}
		\label{fig10}
	\end{figure*}

\begin{figure*}[ht]
		\centering
		\subfloat[$\beta=0.35068$]{
			\includegraphics[width=2.8in]{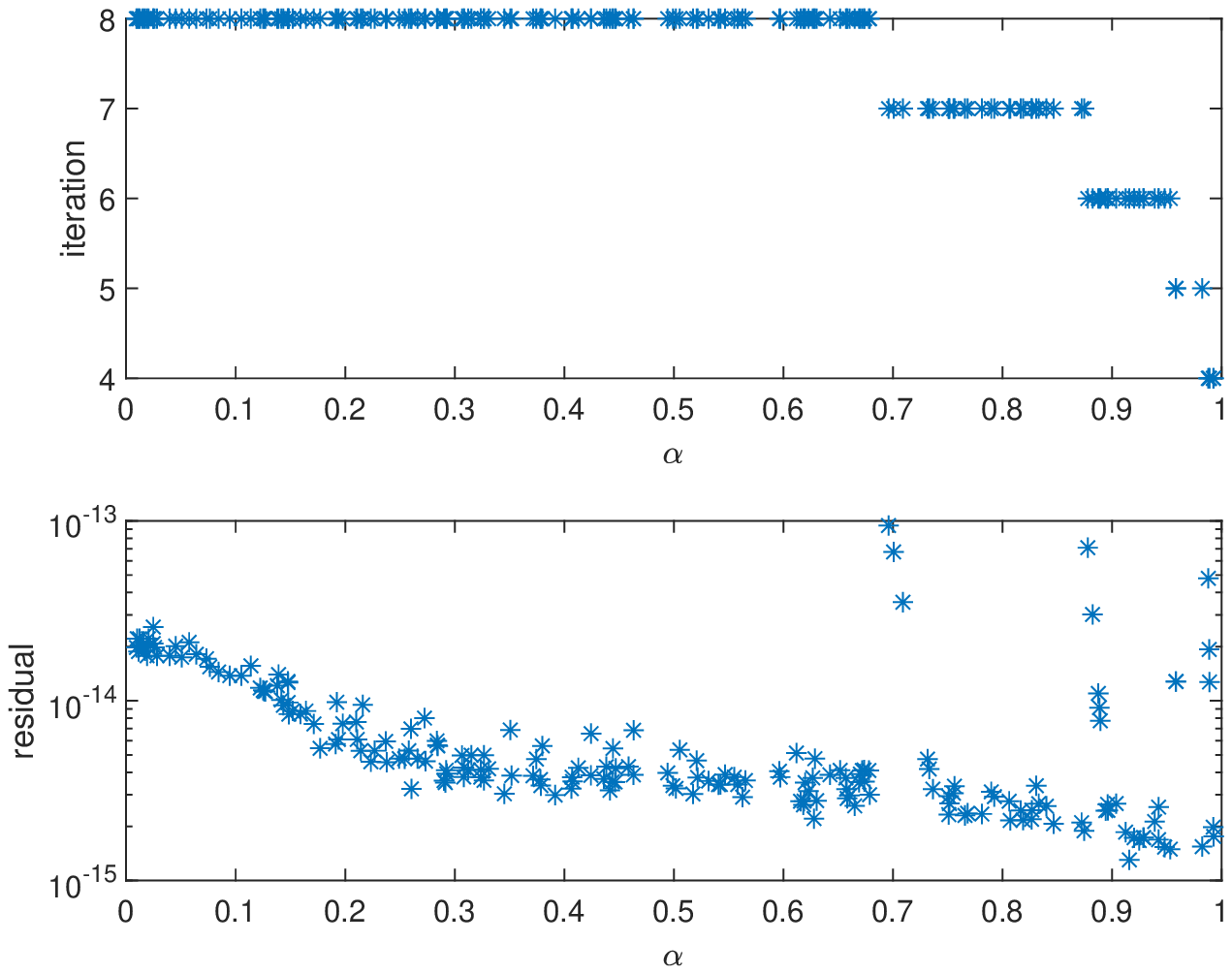}
		}
		\subfloat[$\alpha = 0.44585$]{
			\includegraphics[width=2.8in]{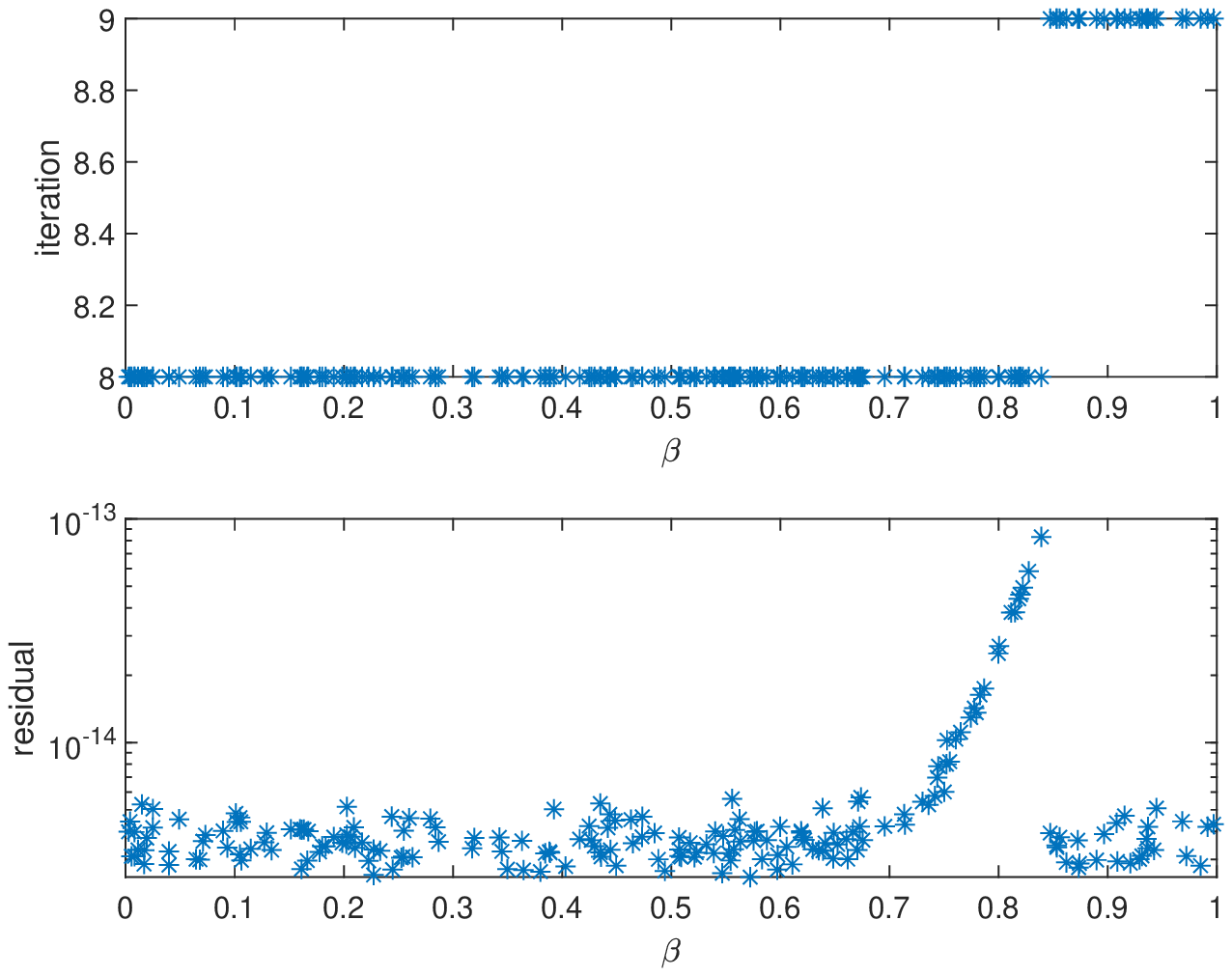}
		}
	\caption{Iterations and ERres  for $n=20$}
		\label{fig20}
	\end{figure*}

\begin{figure*}[ht]
		\centering
		\subfloat[$\beta=0.15662$]{
			\includegraphics[width=2.8in]{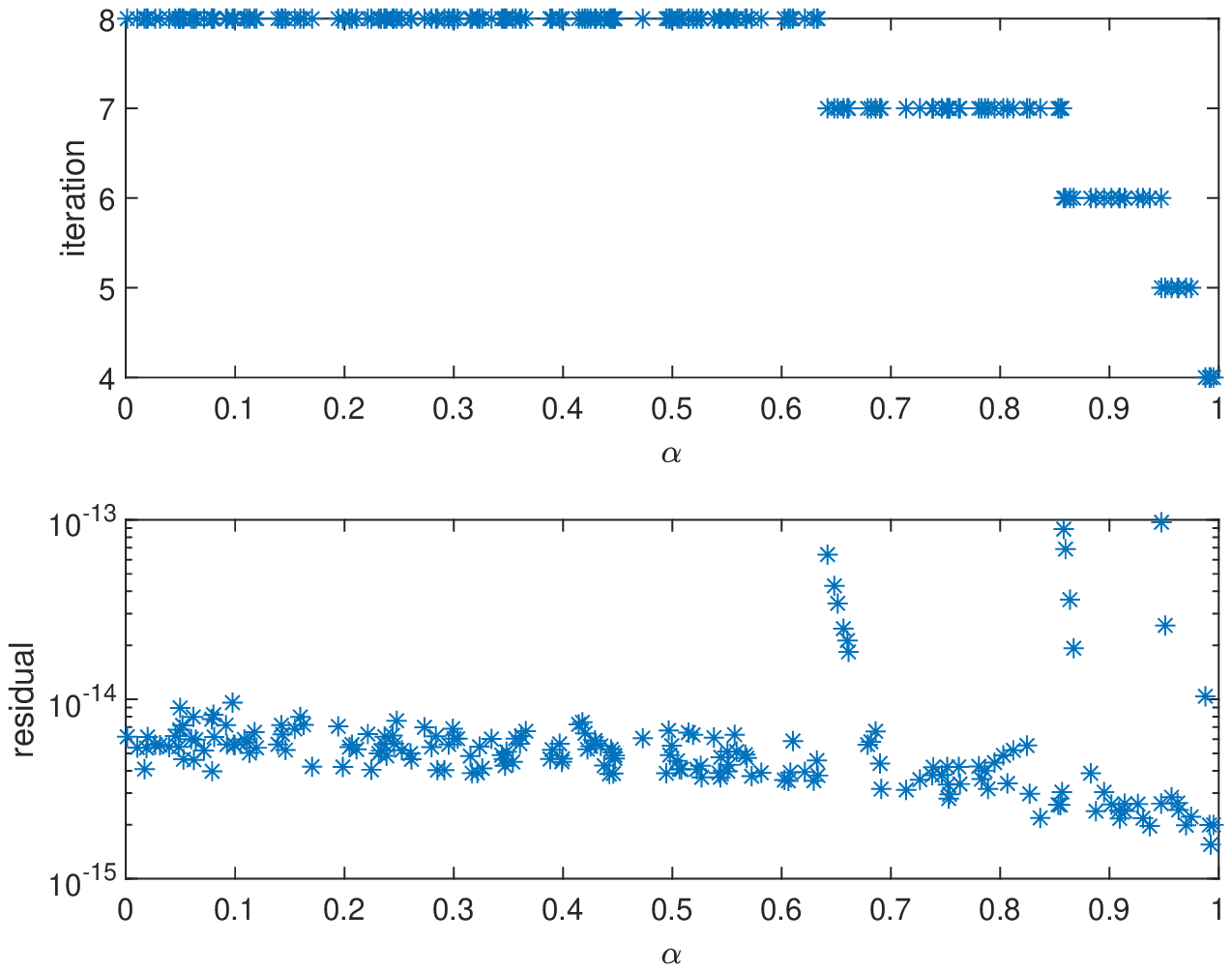}
		}
		\subfloat[$\alpha=0.71478$]{
			\includegraphics[width=2.8in]{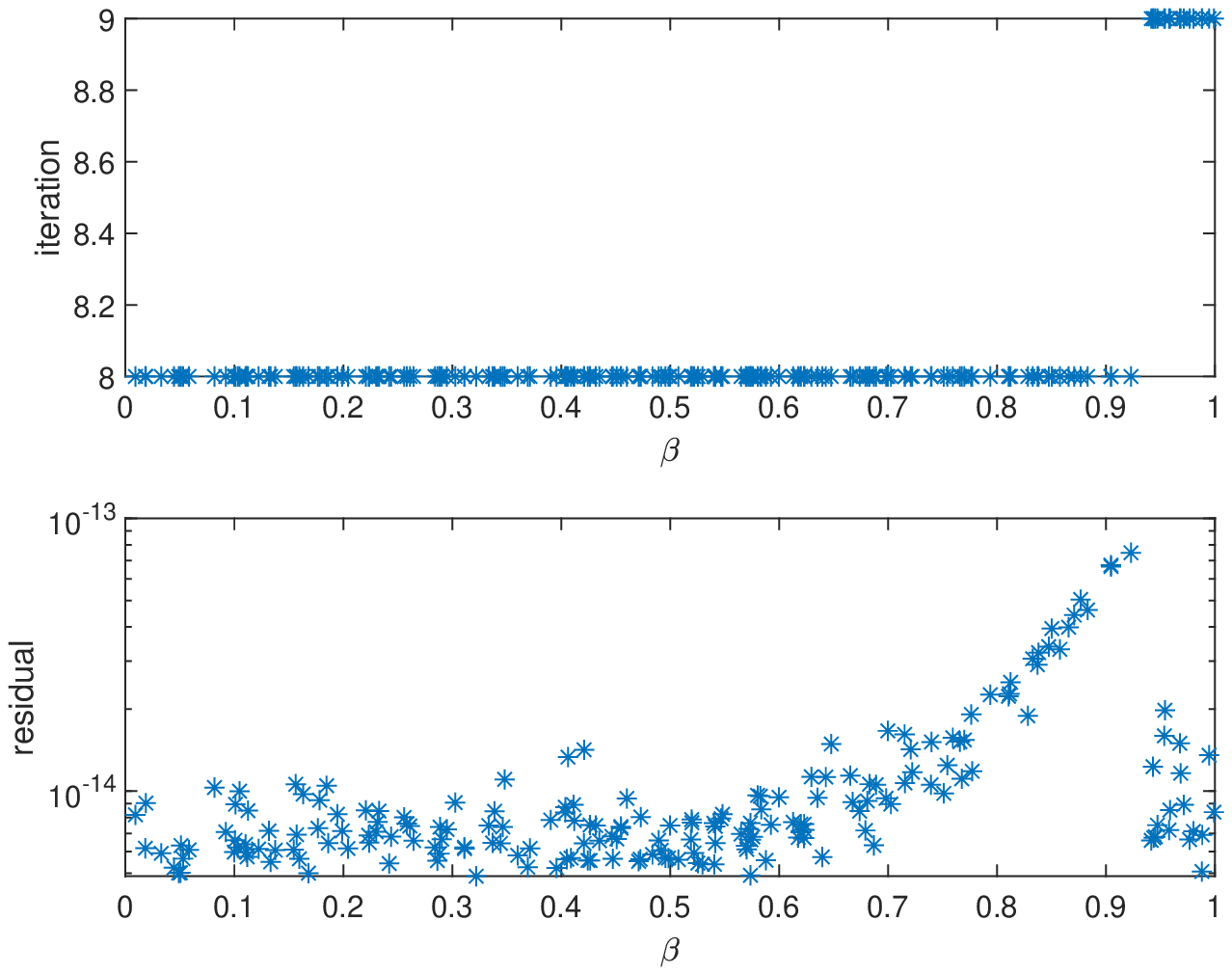}
		}
	\caption{Iterations and ERres  for $n=40$}
		\label{fig40}
	\end{figure*}

\begin{figure*}[ht]
		\centering
		\subfloat[$\beta=0.20647$]{
			\includegraphics[width=2.8in]{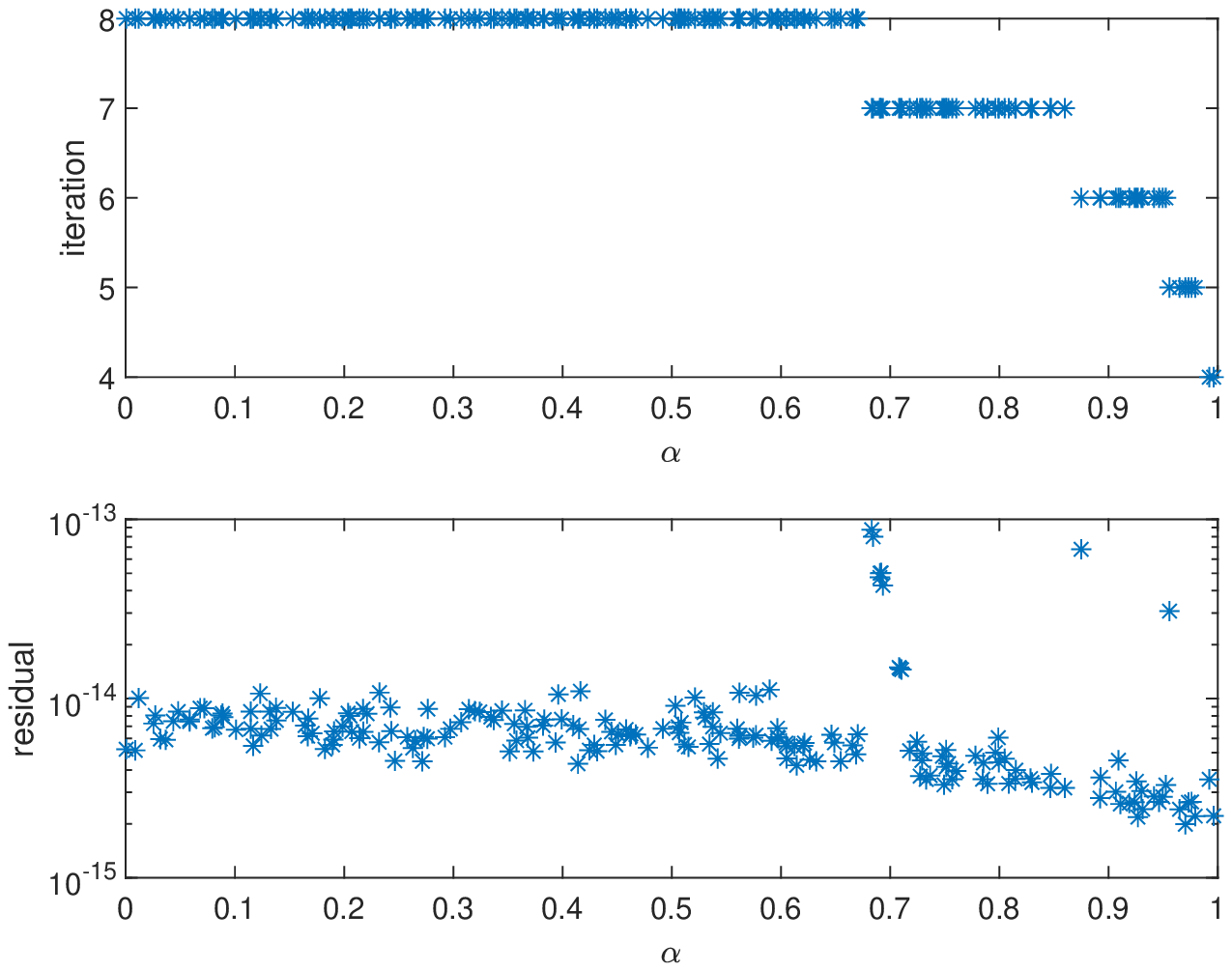}
		}
		\subfloat[$\alpha=0.7015$]{
			\includegraphics[width=2.8in]{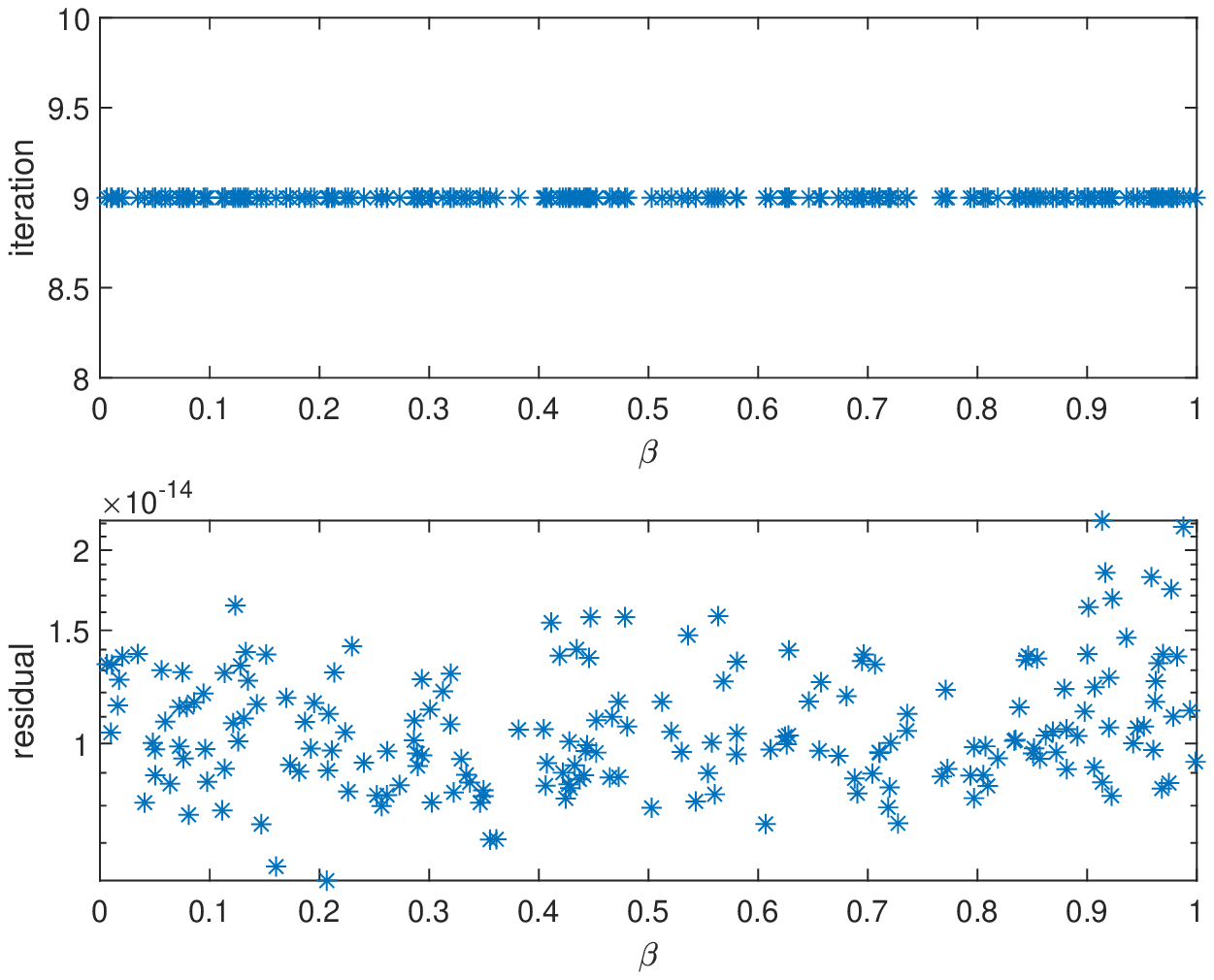}
		}
	\caption{Iterations and ERres  for $n=50$}
		\label{fig50}
	\end{figure*}

\begin{figure*}[ht]
		\centering
		\subfloat[$\beta=0.91508$]{
			\includegraphics[width=2.8in]{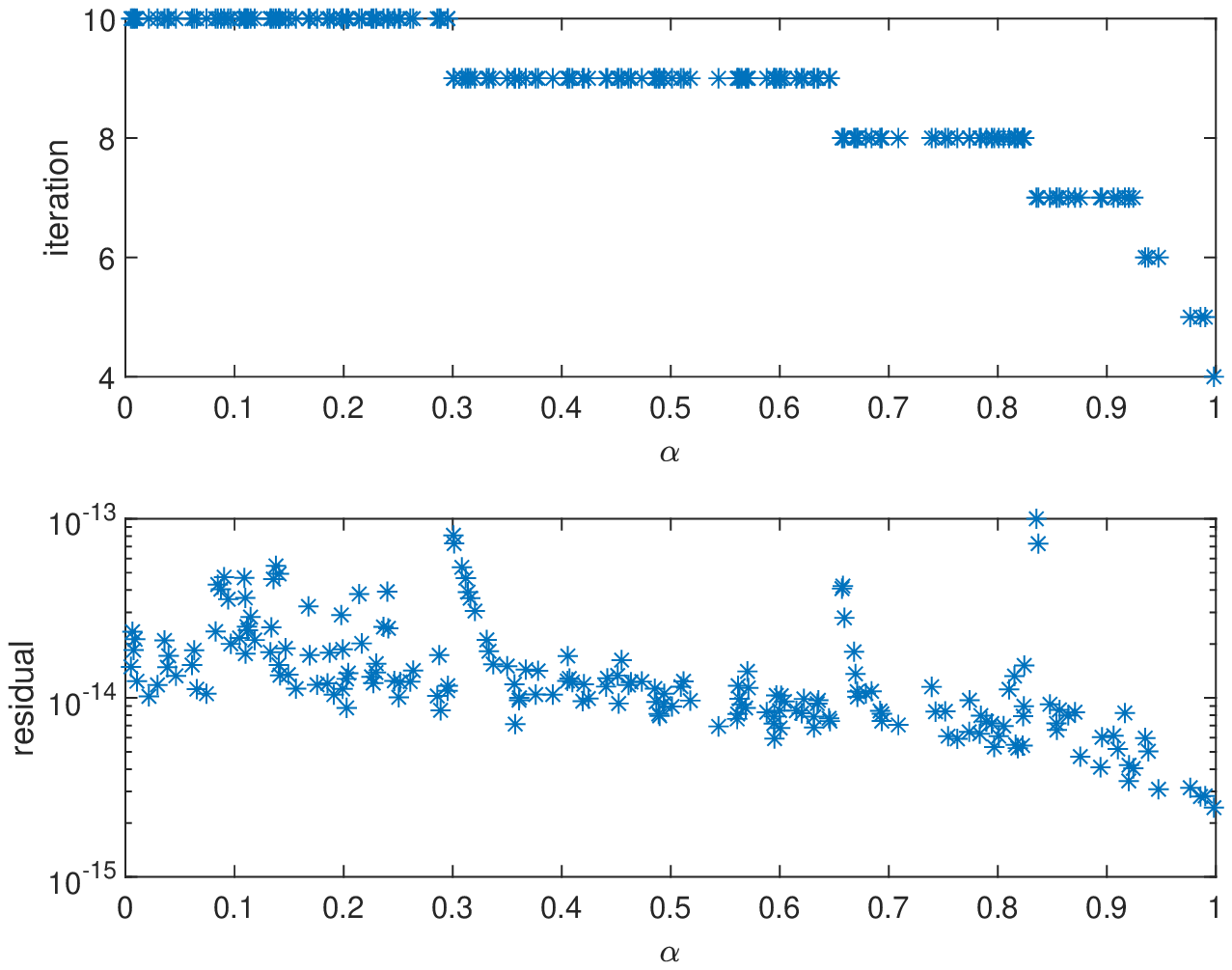}
		}
		\subfloat[$\alpha=0.78181$]{
			\includegraphics[width=2.8in]{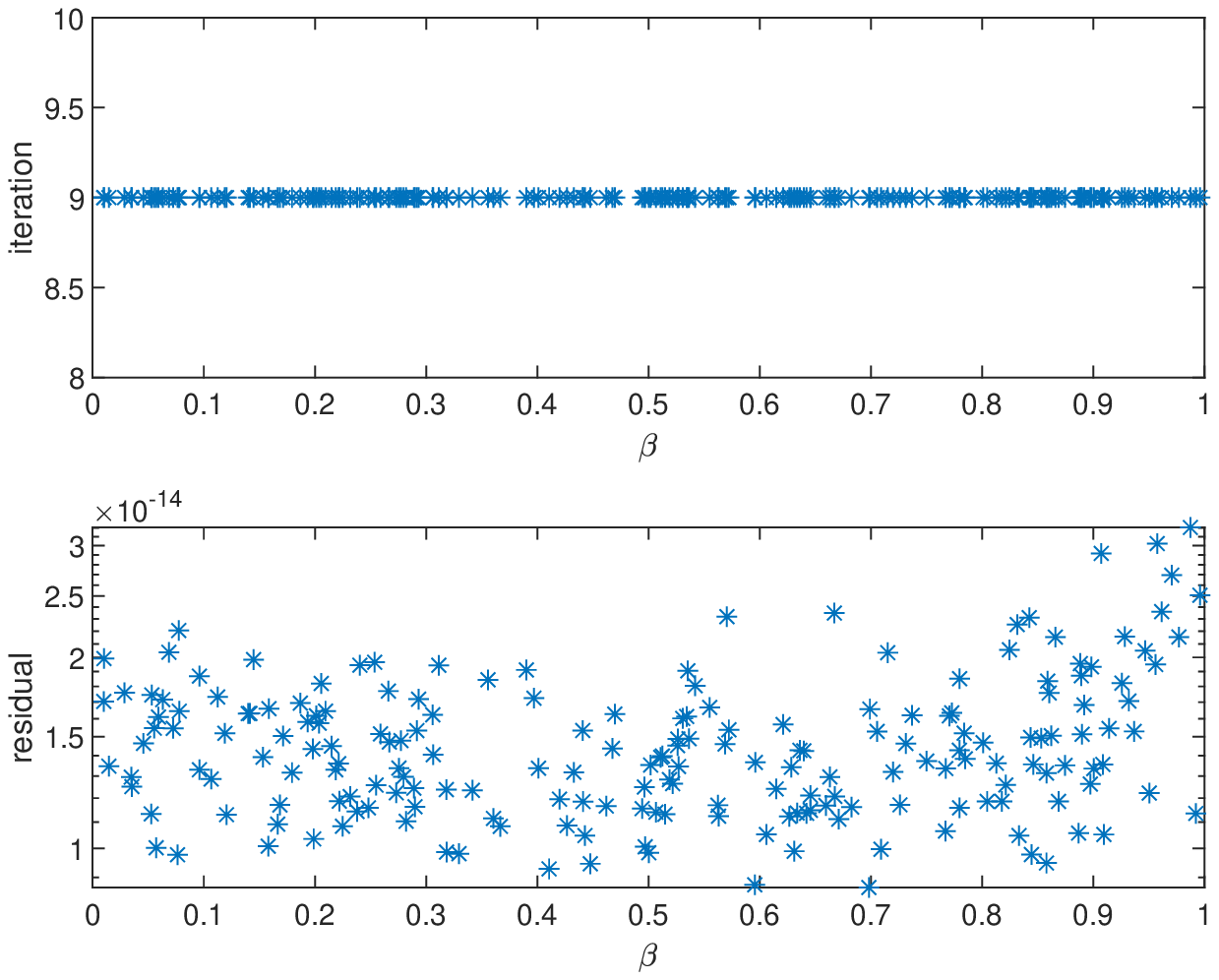}
		}
	\caption{Iterations and ERres  for $n=75$}
		\label{fig75}
	\end{figure*}

\begin{figure*}[ht]
		\centering
		\subfloat[$\beta=0.80531$]{
			\includegraphics[width=2.8in]{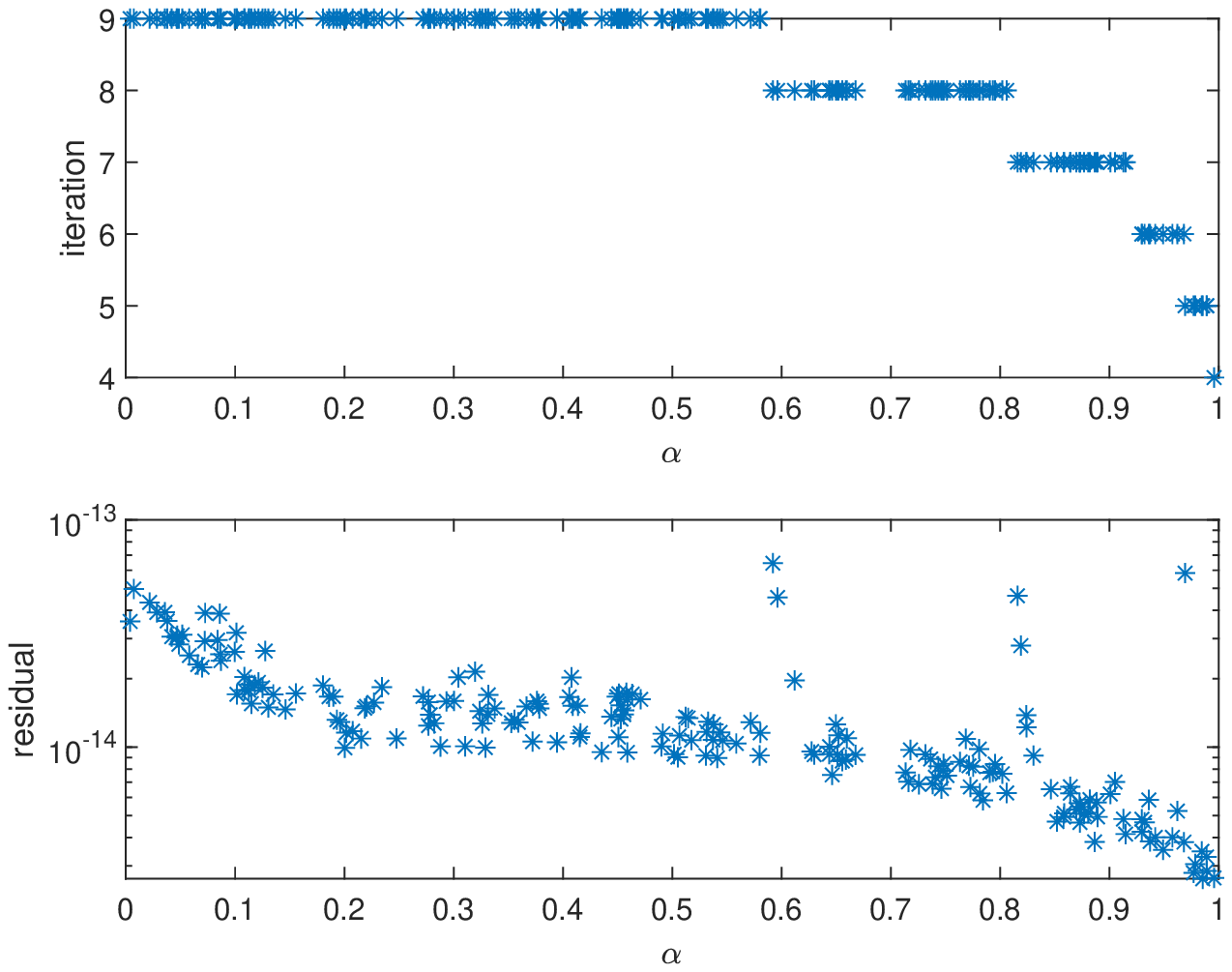}
		}
		\subfloat[$\alpha=0.79772$]{
			\includegraphics[width=2.8in]{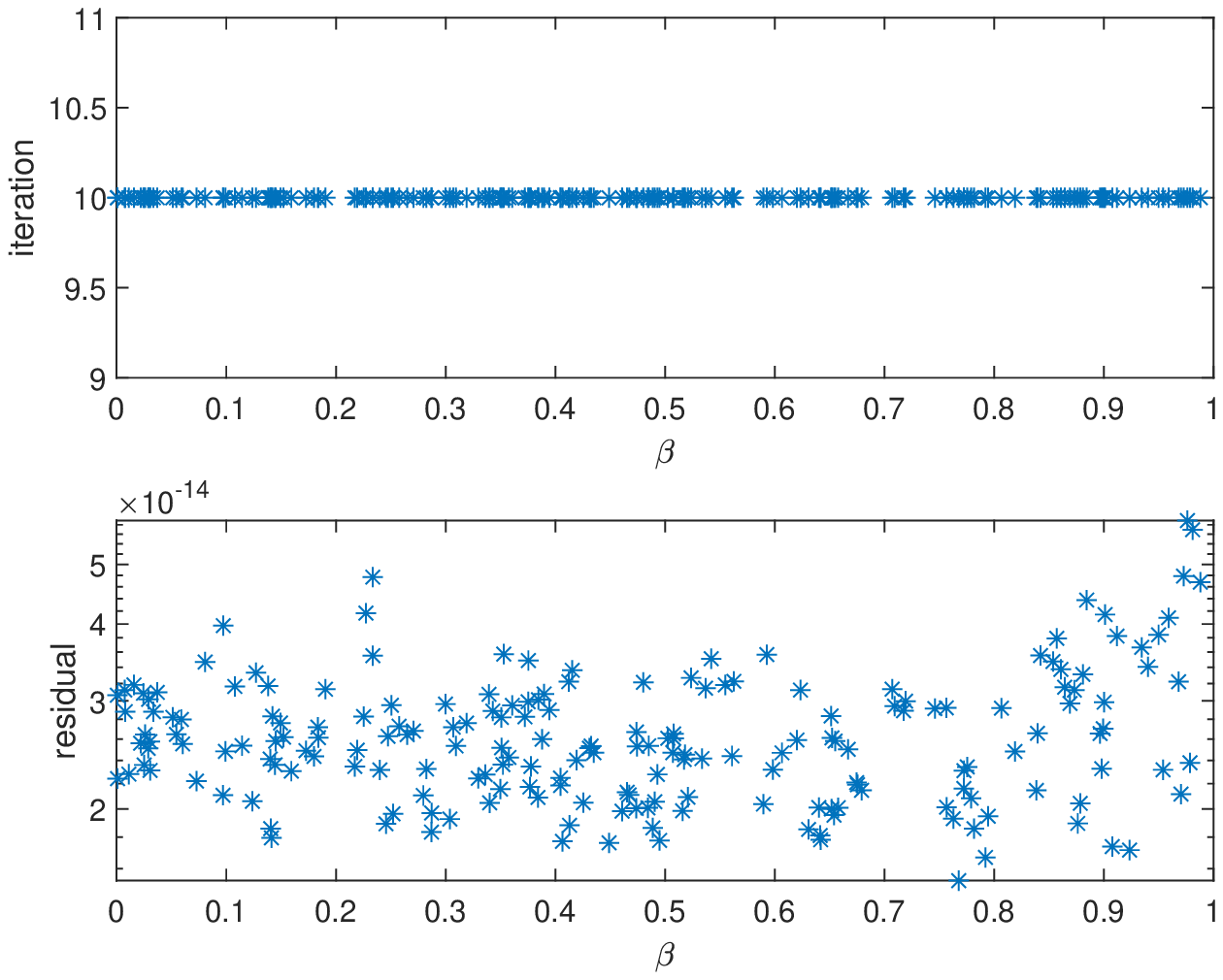}
		}
	\caption{Iterations and ERres  for $n=100$}
		\label{fig100}
	\end{figure*}

\end{example}

\section{Conclusions}\label{sec:conclusions}

The highly accurate alternating-directional doubling algorithm (accADDA) proposed by Xue and Li~\cite{xueL2017highly} is the most efficient method for computing the minimal nonnegative solution $X$ of MAREs of small sizes, 
which keeps the accuracy of all entries in $X$, especially the tiny ones. Illumined by the accDDDA, we propose a highly accurate algorithm for solving large-scale MAREs with low-rank structures. 
Firstly we show that the iteration recursions given in~\cite{xueL2017highly} can be decoupled, enabling  the highly accurate doubling algorithm  to solve large-scale MAREs. 
We prove the kernels in the decoupled form are M-matrices, and construct the novel triplet representations for these kernels, which is not a simple straightforward adaptation of that given in~\cite{xueL2017highly}. 
With these triplet representations, we develop the dADDA for large-scale MAREs with  low-rank structures. Associated linear equations are solved in a  cancellation-free manner, with the help of the GTH-like algorithm. 

Unlike the accADDA, our dADDA just applies  one iteration recursion for the solution $X$, making it possible to utilize the special structures that may exist in the original $A$ and $D$. 
Such as $A$ and $D$ are banded or low-rank  updates with diagonal matrices, 
 it only requires $\bigO(n+m)$ flops in each iteration. 
Numerical results illustrate the efficiency and superiority  of the proposed dADDA.


\bibliographystyle{siamplain}
\bibliography{narels.bib}

\end{document}